\documentclass[Svgc_modified]{Svgc_modified}
% Remove any % below to load the required packages
\usepackage{amsmath}
\usepackage{graphicx}
\usepackage{enumitem}
\usepackage{subfigure}
%\usepackage{latexcad}
% Insert the name of "your" journal with the command below:
%%%%%%%%%%%%%\journalname{Graphs and Combinatorics}
%

\def\1{$\ }
\def\2{\ $}

\def\N{\mbox{\makebox[.2em][l]{I}N}}

\def\cC{{\cal C}}
\def\cE{{\cal E}}
\def\cH{{\cal H}}

\def\hqed{\hfill\framebox(6,6){\ }}

\def\ul{\underline}

\def\eiE{\cE^{EI}}

\def\tj{\widetilde{j}}
\begin{document}
\title{Cycles as edge intersection hypergraphs}
%\subtitle{Insert your subtitle here, if you have.}
\author{Martin Sonntag\inst{1} \and Hanns-Martin Teichert\inst{2}}% etc
% \thanks is optional - remove next line if not needed
%\thanks{\emph{Present address:} Insert the address here if needed}%
%}                     % Do not remove

%running header
%\titlerunning{Running title}%
%\authorrunning{1st. author\inst{1} \and 2nd. author\inst{2}}%

%
%\offprints{}          % Insert a name or remove this line
%
\institute{Faculty of Mathematics and Computer Science, Technische Universit\"{a}t Bergakademie Freiberg, Pr\"{u}ferstra\ss e 1, 09596 Freiberg, Germany
%\email {teichert@math.uni-luebeck.de}
\and Institute of Mathematics, University of L\"{u}beck, Ratzeburger Allee 160, 23562 L\"{u}beck, Germany
%\email{sonntag@tu-freiberg.de}
}
\maketitle
\begin{abstract}
If $\cH=(V,\cE)$ is a hypergraph, its {\it edge intersection hypergraph} $EI(\cH)=(V,\eiE)$ has the edge set $\eiE=\{e_1 \cap e_2 \ |\ e_1, e_2 \in \cE \ \wedge \ e_1 \neq e_2  \ \wedge  \ |e_1 \cap e_2 |\geq2\}$. Picking up a problem from \cite{ST2019}, for $n \ge 24$ we prove that there is a 3-regular (and - if $n$ is even - 6-uniform) hypergraph $\cH=(V,\cE)$ with $\lceil \frac{n}{2} \rceil$ hyperedges and $EI(\cH) = C_n$.
\end{abstract}
\begin{keyword}
Edge intersection hypergraph
\end{keyword}
\small{\bf  Mathematics Subject Classification 2010:} 05C65
%
%\receive{xxxxxxxxxxxxxxxxxxxx}
%\finalreceive{yyyyyyyyyyyyyyyyy}\
\section{Introduction and basic definitions}
All hypergraphs $\cH = (V(\cH), \cE(\cH))$ and (undirected) graphs $G=(V(G), E(G))$  considered in the following may have isolated vertices but no multiple edges or loops.

A hypergraph $\cH = (V, \cE)$ is {\em $k$-uniform} if all hyperedges $e \in \cE$ have the cardinality $k$.
Trivially, any 2-uniform hypergraph $\cH$ is a graph.
The {\em degree} $d(v)$ of a vertex $v \in V$ is the number of hyperedges $e \in \cE$ being incident to the vertex $v$.
$\cH$ is {\em $r$-regular} if all vertices $v \in V$ have the same degree $r = d(v)$.

If $\cH=(V,\cE)$ is a hypergraph, its {\it edge intersection hypergraph} $EI(\cH)=(V,\eiE)$ has the edge set $\eiE=\{e_1 \cap e_2 \ |\ e_1, e_2 \in \cE \ \wedge \ e_1 \neq e_2  \ \wedge \ |e_1 \cap e_2 |\geq2\}$.
%For $k \ge 2$, the $k$-th iteration of the EI-operator is defined to be $EI^k(\cH):= EI(EI^{k-1}(\cH))$, where  $EI^1(\cH):= EI(\cH)$. Moreover, the EI-number $\kEI(\cH)$ is the smallest $k \in \N$ such that $\cE(EI^k(\cH)) = \emptyset.$

Let $e = \{ v_1, v_2, \ldots, v_l \} \in \eiE$ be a hyperedge in $EI(\cH)$. By definition, in $\cH$ there exist (at least) two hyperedges $e_1, e_2 \in \cE(\cH)$ both containing all the vertices $v_1, v_2, \ldots, v_l$, more precisely $\{ v_1, v_2, \ldots, v_l \} = e_1 \cap e_2 $. In this sense, the hyperedges of $EI(\cH)$ describe sets $\{ v_1, v_2, \ldots, v_l \}$ of vertices having a certain, "strong" neighborhood relation in the original hypergraph $\cH$.

For an application see \cite{ST2019}.
%As an application, we consider a hypergraph $\cH = ( V, \cE)$ representing a communication system. The vertices $v_1, v_2, \ldots, v_n \in V$ and the hyperedges $e_1, e_2, \ldots, e_m \in \cE$ correspond to $n$ people and to $m$ (independent) communication channels, respectively. A group $\{ v_{i_1}, v_{i_2}, \ldots, v_{i_k} \} \subseteq V$ of people can communicate in a conference call if and only if their members use one and the same communication channel, i.e.  there is a hyperedge $e \in \cE$ such that $\{ v_{i_1}, v_{i_2}, \ldots, v_{i_k} \} \subseteq e$.
%If we ask whether or not $v_{i_1}, v_{i_2}, \ldots, v_{i_k} $ can even communicate in a conference call after the breakdown of an arbitrarily chosen communication channel, then this question is equivalent to the problem of the existence of a hyperedge $\eie \in \eiE$ in the edge intersection hypergraph $EI(\cH)$ containing all these vertices, i.e.  $\{ v_{i_1}, v_{i_2}, \ldots, v_{i_k} \} \subseteq \eie$.

Note that there is a significant difference to the well-known notions of the {\em intersection graph} (cf. \cite{Nai}) or {\em edge intersection graph} (cf. \cite{Sku}) $G=(V(G), E(G))$  {\em of linear hypergraphs} $\cH = (V(\cH), \cE(\cH))$, since there we have $V(G) = \cE(\cH)$.

In \cite{Gol}, \cite{Bie} and  \cite{Cam} the same notation is used for so-called {\em edge intersection graphs of paths}, but there the authors consider paths in a given graph $G$ and the vertices of the resulting edge intersection graph correspond to these  paths in the original graph $G$.

Obviously, for certain hypergraphs $\cH$ the edge intersection hypergraph $EI(\cH)$  can be 2-uniform; in this case $EI(\cH)$ is a simple, undirected graph $G$. But in contrast to the intersection graphs or  edge intersection graphs mentioned above,   $G=EI(\cH)$ and $\cH$ have one and the same vertex set $V(G) = V(\cH)$.
Therefore we consistently use our notion "edge intersection hypergraph" also when this hypergraph is 2-uniform.

In \cite{ST2019}, we investigated structural properties of edge intersection hypergraphs and proved that all trees but seven exceptional ones are edge intersection hypergraphs of  3-uniform hypergraphs. In the present paper, for the class of cycles we investigate the following natural question.

\medskip
{\bf Problem 1.} (Problem 3 in {\bf \cite{ST2019})}
Let ${\cal G}$ be a class of graphs, $k \ge 3$, $n_0 \in \N^+ $, $n \ge n_0$ and $G_n \in {\cal G}$ a graph with $n$ vertices. What is the minimum cardinality $| \cE |$ of the edge set of a $k$-uniform hypergraph $\cH_n =(V, \cE)$ with $EI( \cH_n) = G_n$?

\medskip
In order to attack Problem 1, at first it makes sense to investigate simple classes of graphs.
Let us consider $G = C_n$, where $C_n$ denotes the cycle with $n$ vertices.
At the start, let us omit the restriction to uniform hypergraphs.

Of course, for some small $n$,  hypergraphs $\cH=(V, \cE)$ with $EI(\cH) = C_n$ and minimum cardinality $| \cE |$ can be easily found. So by distinction of cases it can be proved that for $n \in \{3,4 \}$ this minimum cardinality is $n+1=4$, whereas for $n \in \{5,6 \}$ the wanted minimum is $n$ (see the strong 3-uniform hypercycle $\hat{\cC}_n^3= (V, \cE)$ with $V=\{ 1,2, \ldots, n \}$ and $\cE = \{ \{ i, i+ 1, i+2 \} \, | \, i \in V \}$ (the vertices taken modulo $n$)).

%= (V, \cE)$ with $V=\{ 1,2, \ldots, n \}$ and $\cE = \{ \{ i, i+ 1, i+2 \} \, | \, i \in V \}$ (the vertices taken modulo $n$) are edge-minimal 3-uniform hypergraphs with $EI(\cH_n^3) = C_n$; this answers the question for $k = 1$ and $n_0 = 4$.

This situation completely changes for larger $n$. Then it seems to be difficult to determine this minimum cardinality $| \cE |$ without additional restrictions on  the  hypergraphs $\cH$ being under consideration.
So in the range $7 \le n \le 23$ only unsatisfying, partial results are known.

This way, $k$-uniformity comes into the play. A first result is the following.

\bigskip
{\bf Corollary 1. (\cite{ST2019})}
For $n \ge 5$ the cycle $C_n$ is an edge intersection hypergraph of a 3-uniform \\[0.3ex] hypergraph, namely $ C_n = EI(\hat{\cC}_n^3)$.

\bigskip
\noindent
Note that $\hat{\cC}_n^3$ is edge minimal in the 3-uniform case.

Considering $k$-uniform hypergraphs ($k \ge 3$), we will see later that multiples of 3   are good candidates for the number $k$. The reason is that we will build the hyperedges $e \in \cE$ of $\cH$ by combining so-called {\em 3-sections} of the vertices of $V= V(C_n)= V( \cH)$ (see Subsection 2.1 for the definition of a $k$-section and Subsection 2.2 for the construction of the hyperedges). Therefore, in our main result, the number $k$ will be chosen equal to 6.
Under the restriction of 6-uniformity, for even $n \ge 24$ we will construct a family of  3-regular hypergraphs $\cH = (V, \cE)$ with $EI(\cH) = C_n$ and minimum cardinality $| \cE | = \frac{n}{2}$.

If $n$ is odd, then we additionally need one hyperedge $e$ of cardinality 3; in this case we obtain $| \cE | = \frac{n+1}{2}$.

\section{Generating $C_n$ as an edge intersection hypergraph}

\subsection{A lower bound for the number of hyperedges}
\label{Sec_low}

At first we will give some notations. For this end, let $n \ge 24$ be even and $\cH=(V, \cE)$ a 6-uniform hypergraph with  $EI(\cH) = (V, \eiE ) = C_n$. In detail, let $C_n =(V, E)$, $V= \{ 1,2, \ldots, n \}$ and
$E = \{ \{ 1,2 \}, \{ 2, 3 \}, \ldots, \{ n-1, n \}, \{ n, 1 \} \}$.
In general,  the vertices in $V$ will be always taken modulo $n$.

For $i \in \{ 1,2, \ldots, n \}$ and $e \in \cE$, a sequence $(i, i+1, \ldots, i+k-1)$ with $\{i, i+1, \ldots, i+k-1 \} \subseteq e$, such that $i-1 \notin e$ and $i+k \notin e$, is referred to as a {\em $k$-section } of $e$ on $C_n$.

Any subset $\{ v_1, v_2 , \ldots, v_k \} \subseteq V $ of $k \ge 2$ vertices containing two vertices $v, v'$ with $| v - v'| \ge 2$ is called a {\em chord} of $C_n$. Since $E(C_n) = \eiE$ cannot contain any chord, for any two distinct hyperedges $e, e' \in \cE$ it holds $| e \cap e' | \leq 2$. For the same reason, in case of $| e \cap e' | = 2$ there exists a vertex $i \in V$ with $| e \cap e' | = \{ i, i+1 \}$.

In our first theorem, we will prove that $\frac{n}{2}$ is a lower bound for the cardinality of the edge set $\cE$ of a 6-uniform hypergraph $\cH =(V, \cE)$ with $EI(\cH) = C_n$, for even $n \ge 24$.

The motivation for 24 as a lower bound for $n$ results from the fact that our main theorem in Subsection \ref{Sec_Cn} provides the construction of such hypergraphs $\cH$ for all even $n = | V | \ge 24$.

\begin{theorem}
\label{Tmin} Let $n \ge 24$ be even and $\cH=(V, \cE)$ a 6-uniform hypergraph with  $EI(\cH) = C_n$.
Then $| \cE | \ge \frac{n}{2}$.
\end{theorem}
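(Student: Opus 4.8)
The plan is to combine two independent counting bounds through a single per-hyperedge identity. For a hyperedge $e \in \cE$, let $s(e)$ be its number of sections (the maximal runs of consecutive vertices) and let $p(e)$ be the number of cycle edges $\{i,i+1\}$ with $\{i,i+1\}\subseteq e$. Since the sections partition $e$ and a section of length $\ell$ contains exactly $\ell-1$ cycle edges, I would first record the elementary identity
\[ p(e)+s(e)=|e|=6 \qquad (e\in\cE), \]
so that, writing $m=|\cE|$, summation yields $\sum_{e\in\cE}p(e)+\sum_{e\in\cE}s(e)=6m$. The whole proof then reduces to bounding the two sums separately from below, the first by $2n$ and the second by $n$.

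For the first sum, let $c_i$ denote the number of hyperedges containing the pair $\{i,i+1\}$, so that $\sum_{i=1}^n c_i=\sum_{e\in\cE}p(e)$. Because $EI(\cH)=C_n$, every cycle edge $\{i,i+1\}$ lies in $\eiE$ and is therefore $e_1\cap e_2$ for two distinct hyperedges, both of which contain $\{i,i+1\}$; hence $c_i\ge 2$ for each $i$ and $\sum_{e\in\cE}p(e)=\sum_{i=1}^n c_i\ge 2n$.

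For the second sum I would show that every vertex of $V$ is the left endpoint of some section, which forces $\sum_{e\in\cE}s(e)\ge n$ because each section has a unique left endpoint and distinct vertices give distinct sections. Fix $i$ and choose $e_1,e_2$ with $e_1\cap e_2=\{i,i+1\}$. As the intersection is \emph{exactly} $\{i,i+1\}$, the vertex $i-1$ cannot lie in both $e_1$ and $e_2$; say $i-1\notin e_1$. Then $i\in e_1$ while $i-1\notin e_1$, so the section of $e_1$ through $i$ begins at $i$, i.e.\ $i$ is a left endpoint of a section. Ranging over $i=1,\dots,n$ exhibits $n$ distinct left endpoints, whence $\sum_{e\in\cE}s(e)\ge n$. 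Combining everything, $6m=\sum_{e\in\cE}p(e)+\sum_{e\in\cE}s(e)\ge 2n+n=3n$, that is $|\cE|\ge \frac{n}{2}$.

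The routine parts are the identity and the first bound; the delicate point — and the one I expect to be the real obstacle — is the left-endpoint argument. There one must exploit the precise structural fact (already established in the preceding discussion) that a size-$2$ intersection is exactly a consecutive pair $\{i,i+1\}$, so that excluding $i-1$ from one of the two realizing hyperedges is legitimate and genuinely produces a fresh section start at each of the $n$ vertices. I would also note that this argument nowhere uses parity or the bound $n\ge 24$ beyond making the cyclic labelling and the chord notions meaningful, the hypotheses being inherited only to match the construction of the main theorem.
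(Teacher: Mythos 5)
Your proof is correct, and it takes a genuinely different---and cleaner---route than the paper's. The paper argues by contradiction: it classifies hyperedges by the number $k_e$ of cycle edges they contain (your $p(e)$), uses the double count $n \le \frac{1}{2}\sum_{e} k_e$, and then must deal with hyperedges having $k_e = 5$ (the 6-sections) via an iterative argument in which each such hyperedge forces three further hyperedges containing both a 2-section and a 4-section, and so on, terminating---``because $\cH$ is finite''---in a hyperedge with a 2-section but no 4-section; this yields the amortized bound $\sum_e k_e \le 4\,|\cE|$ and hence the contradiction. Your identity $p(e)+s(e)=6$ replaces that entire structural analysis: your bound $\sum_e p(e) \ge 2n$ is exactly the paper's half-generation count, while your left-endpoint injection giving $\sum_e s(e) \ge n$ does in three lines what the paper's inductive chain does laboriously (indeed, under the contradiction hypothesis $|\cE| < \frac{n}{2}$, your bound gives $\sum_e k_e = 6|\cE| - \sum_e s(e) \le 6|\cE| - n < 4|\cE|$, which is the paper's key claim). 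Your argument also buys more generality: it never uses chord-freeness, i.e.\ the inclusion $\eiE \subseteq E(C_n)$, but only that every cycle edge is realized as an exact intersection; it needs neither $n \ge 24$ nor evenness; and it generalizes verbatim to $k$-uniform hypergraphs, giving $|\cE| \ge \frac{3n}{k}$. One small correction to your closing commentary: the ``delicate point'' is not the fact that every size-2 intersection is a consecutive pair (that is the chord-freeness direction, which your proof nowhere needs); what your left-endpoint argument actually uses is only that, by the definition of $\eiE$, the hyperedge $\{i,i+1\}$ of $EI(\cH)$ is the \emph{exact} intersection $e_1 \cap e_2$ of two distinct hyperedges, so that $i-1$ must be missing from at least one of them.
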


\begin{proof}
Let $\cH$ fulfil the assumptions of the Theorem and $e, e' \in \cE(\cH)$ with $e \cap e' = \{ i, i+1 \} \in E(C_n)$, where $i \in V$. We say that the hyperedge $e$  {\em half-generates} the edge $\{ i, i+1 \}$ of $C_n$. The term "half-generate" comes from the fact that we always need at least two hyperedges $e \neq e'$ to generate an edge of $C_n$.

First, we discuss the number $k_e := | \{ \{ i, i+1 \} \, | \, i \in V \, \wedge  \ \{i, i+1 \} \subseteq e \} |$ of the edges of $C_n$ being half-generated by the hyperedge $e$. The following values for $k_e$ may occur:

\begin{enumerate}
\item
$k_e = 5$ -- then $e$ has to consist of a 6-section.

\item
$k_e = 4$ -- then we have the following three possibilities:
\begin{enumerate}
\item
$e$ contains a 5-section and a 1-section;
\item
$e$ consists of a 4-section and a 2-section;
\item
$e$ has  two 3-sections.
\end{enumerate}
\item $k_e = 3$ -- again three variants are possible:
\begin{enumerate}
\item
$e$ includes a 4-section and two 1-sections;
\item
$e$ contains a 3-section, a 2-section and a 1-section;
\item
$e$ is composed of three 2-sections.
\end{enumerate}
\item
$k_e = 2$ -- the hyperedge $e$ consists of
\begin{enumerate}
\item
a 3-section and three 1-sections or
\item
two 2-sections  and two 1-sections.
\end{enumerate}
\item $k_e = 1$ -- now $e$ is the union of a 2-section and four 1-sections.
\item  $k_e = 0$ implies that in $e$ we have six 1-sections, i.e., six  vertices being non-adjacent in $C_n$.
\end{enumerate}

\noindent
 Assume $| \cE | < \frac{n}{2}$ and $k_e \le 4$, for all hyperedges $e \in \cE$. This leads to the contradiction
 $$  | E(C_n) | = n \le \frac{1}{2} \sum_{e \in \cE} k_e \le \frac{1}{2} \cdot | \cE | \cdot 4 < n . $$

Therefore, in case of $| \cE | < \frac{n}{2}$ there has to exist at least one hyperedge  $e' \in \cE$ with $k_{e'} = 5$. Let $e' = \{ i, i+1, \ldots, i+5 \}$, where  $i \in V$.

Moreover, for simplicity we label $V$ so that $e' = \{ 1,2, \ldots, 6 \}$ holds.

The "inner $C_n$-edges" of $e'$, i.e. $\{2,3 \}$, $\{ 3,4 \}$ and $\{ 4,5 \}$ arise as intersections $e_1 \cap e'$, $e_2 \cap e'$ and $e_3 \cap e'$ of $e'$  with pairwise distinct hyperedges $e_1, e_2, e_3 \in \cE \setminus \{ e' \}$. For this reason, each of the three hyperedges     $e_1$, $e_2$ and  $e_3$ has to possess at least one 2-section.

Hence, for $i \in \{ 1,2, 3 \}$ we have $k_{e_i} \le 4$ and we obtain $k_{e'} + \sum_{i=1}^{3} k_{e_i} \le 5 + 12 = 17$.
In order to get  $k_{e'} + \sum_{i=1}^{3} k_{e_i}  = 17 > 16 = 4 \cdot | \{ e', e_1, e_2, e_3 \} |$, necessarily each of the hyperedges $e_1$, $e_2$ and  $e_3$ has to include a 4-section.

Considering the three "middle" pairs of the vertices in the 4-sections
$(i, i+1, i+2, i+3)$, $(j, j+1, j+2, j+3)$ and $(k, k+1, k+2, k+3)$ of $e_1$, $ e_2$ and  $e_3$, respectively, an analog argumentation is true: again we need three hyperedges $e'_1$, $ e'_2$ and  $e'_3$, each of them consisting of a 2-section ($(i+1, i+2)$, $(j+1, j+2)$ and $(k+1, k+2)$, respectively) and a 4-section, to half-generate now $29 > 28 = 4 \cdot | \{ e',e_1, e_2, e_3, e'_1, e'_2, e'_3 \} |$ edges of $C_n$.
%Since $\cH$ (as well as $C_n$) is finite
Because $\cH$ is finite, this leads inductively to the contradiction that there is a hyperedge $e^{\ast} \in \cE$ containing at least one 2-section but no 4-section. Then $e^{\ast}$ half-generates at most 3 edges of $C_n$. Let $e', e_1, e_2, e_3, e'_1, e'_2, e'_3, \ldots, e^{\ast}$ be the set of all hyperedges used up to this point and $t$ be the number of these hyperedges. We easily see
$$ k_{e'} + k_{e_1} + k_{e_2} + k_{e_3} + k_{e'_1} + k_{e'_2} + k_{e'_3} + \ldots + k_{e^{\ast}} = 5 + k_{e_1} + k_{e_2} + k_{e_3} + k_{e'_1} + k_{e'_2} + k_{e'_3} + \ldots + 3 \le 4 t.$$

This argument is valid for each hyperedge $e' \in \cE$ with $k_{e'} = 5$.  Moreover, for all other hyperedges $\tilde{e} \in \cE$ we know $k_{\tilde{e}} \le 4$ and this leads to $\sum_{e \in \cE} k_e \le 4 \cdot | \cE |$. This yields the same contradiction as above, namely

 $$  n =  | E(C_n) |  \le \frac{1}{2} \sum_{e \in \cE} k_e \le 2 \cdot | \cE |  < 2 \cdot \frac{n}{2}  $$
  and the proof is complete.\hqed
  %that in order to generate $C_n$ as the edge intersection hypergraph of a 6-uniform hypergraph $\cH$ at least $\frac{n}{2}$ hyperedges have to exist in $\cH$.

\end{proof}

\subsection{The construction of hypergraphs $\cH$ with $EI(\cH) = C_n$}
\label{Sec_Cn}

Our main result  is the following.

\begin{theorem}
\label{Teven} Let $n \ge 24$. Then there exists a hypergraph $\cH=(V, \cE)$  with  $EI(\cH) = C_n$ such that the following holds.
\begin{enumerate}
\item[(i)]
If $n$ is even, then $\cH$ is 3-regul\"{a}r, 6-uniform and $|\cE| = \frac{n}{2}$.
\item[(ii)]
If $n$ is odd, then $\cH$ is 3-regul\"{a}r, $|\cE| = \frac{n+1}{2}$, $\cH$ contains one hyperedge
%${\underline e}$
of cardinality 3 and all other hyperedges in $\cH$ have cardinality 6.
\end{enumerate}
\end{theorem}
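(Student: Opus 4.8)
The plan is to realize $\cH$ on the vertex set $V=\{1,\dots,n\}$ of $C_n$ as a union of $3$-sections. For each $j\in V$ let $S_j=\{j,j+1,j+2\}$ be the cyclically consecutive triple, so that $S_j$ half-generates exactly the two edges $\{j,j+1\}$ and $\{j+1,j+2\}$. Since Theorem~\ref{Tmin} forces $|\cE|=\frac n2$ hyperedges, each of which should half-generate $4$ edges while every edge is half-generated twice, I will take every hyperedge to be a disjoint union of two $3$-sections and use each of the $n$ sections $S_1,\dots,S_n$ exactly once. Concretely I fix a fixed-point-free involution (a perfect matching) $\pi$ of the index set $\{1,\dots,n\}$ and set $\cE=\{\,S_j\cup S_{\pi(j)}\mid j\in V\,\}$. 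If every matched pair has cyclic distance $\ge 3$, then $S_j$ and $S_{\pi(j)}$ are disjoint, so each hyperedge is $6$-uniform and $|\cE|=\frac n2$; moreover every vertex $i$ lies in exactly the three sections $S_{i-2},S_{i-1},S_i$, which are pairwise within cyclic distance $2$ and hence never matched to one another, so $\cH$ is automatically $3$-regular. Thus part (i) reduces entirely to forcing $EI(\cH)=C_n$.

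Next I will translate $EI(\cH)=C_n$ into conditions on $\pi$. Recall from the discussion before Theorem~\ref{Tmin} that $EI(\cH)=C_n$ holds precisely when no two hyperedges meet in a set of size $\ge 3$ or in a non-consecutive pair (either would be a chord), while every consecutive pair $\{i,i+1\}$ does occur as a hyperedge intersection. Now $\{i,i+1\}=S_{i-1}\cap S_i$, and because matched sections have distance $\ge 3$ the sections $S_{i-1},S_i$ are never matched together, so they lie in distinct hyperedges $E=S_{i-1}\cup S_{\pi(i-1)}$ and $E'=S_i\cup S_{\pi(i)}$ with $\{i,i+1\}\subseteq E\cap E'$. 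Forcing equality $E\cap E'=\{i,i+1\}$ — which simultaneously generates the edge and prevents a chord — requires the three cross terms $S_{\pi(i-1)}\cap S_i$, $S_{i-1}\cap S_{\pi(i)}$ and $S_{\pi(i-1)}\cap S_{\pi(i)}$ to vanish, i.e. $\pi(i-1)$ and $\pi(i)$ must lie at distance $\ge 3$ from each other and from $i-1,i$. The remaining chord-freeness, for sections that are not neighbours, unwinds similarly: the dangerous configuration is two sections at cyclic distance $2$ (already sharing one vertex) whose partners are again within distance $2$, gluing two isolated vertices into a chord; so I must also forbid $\pi$ from mapping any two indices at distance $\le 2$ to two indices at distance $\le 2$.

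The heart of the proof, and the main obstacle, is to exhibit an explicit involution $\pi$ meeting all these separation requirements and then to verify that they really eliminate every unwanted intersection. I plan to use the observation that any bijection $\sigma$ of $\{0,\dots,\frac n2-1\}$ induces a fixed-point-free involution via $j\leftrightarrow\frac n2+\sigma(j)$, which always matches partners across the two halves of the cycle, hence at distance $\approx\frac n2\ge 3$. It then suffices to choose $\sigma$ so that neighbours are sent far apart: interleaving the low and high halves (values $0,\tfrac n4,1,\tfrac n4+1,\dots$) makes consecutive images roughly $\frac n4$ apart and maps close indices to distant values, which is exactly what the conditions above demand. The hypothesis $n\ge 24$ enters here to guarantee that this spread comfortably exceeds the required slack of $3$, leaving only a bounded number of boundary indices (near $0$, $\frac n2$ and the wrap-around at $n$) and a short case distinction according to the residue of $n$ to be checked directly. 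Completing this finite verification — confirming that the only hyperedge intersections of size $\ge 2$ are precisely the $n$ consecutive pairs, with no chords — is the technical crux of part (i).

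Finally, for part (ii) I will treat odd $n$ by noting that the $n$ sections cannot be perfectly matched: I keep one section $S_{j_0}$ as a stand-alone hyperedge of cardinality $3$ and match the remaining $n-1$ sections by the scheme above into $\frac{n-1}2$ pairs, giving $|\cE|=\frac{n+1}2$ with a single $3$-element hyperedge and all others $6$-uniform. The same distance-$\ge 3$ matching keeps $\cH$ $3$-regular, and it remains only to recheck locally around $j_0$ that the two edges $\{j_0,j_0+1\}$ and $\{j_0+1,j_0+2\}$ are still generated — now by $S_{j_0}$ together with $S_{j_0-1}$ and $S_{j_0+1}$, respectively — and that the lone hyperedge introduces no chord. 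This re-uses the separation argument from part (i) almost verbatim.
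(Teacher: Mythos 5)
Your structural reduction is essentially the same skeleton as the paper's construction: in the paper, too, every hyperedge is the union of two cyclic $3$-sections, each of the $n$ sections $S_1,\dots,S_n$ is used in exactly one hyperedge (this is exactly how the paper deduces $3$-regularity), so the whole construction amounts to choosing a fixed-point-free involution $\pi$ on the section indices. Your two separation conditions — matched sections far apart, and no two indices at cyclic distance $\le 2$ sent to two indices at cyclic distance $\le 2$ — are also the correct translation of chord-freeness, and you are right that edge-generation then comes for free. The genuine gap is in what you yourself call the crux: the explicit involution you offer violates your own conditions. With the interleaving $\sigma(2t)=t$, $\sigma(2t+1)=\frac n4+t$ and $\pi(j)=\frac n2+\sigma(j)$, the indices $0$ and $2$ are at distance $2$ while $\pi(0)=\frac n2$ and $\pi(2)=\frac n2+1$ are at distance $1$: this is precisely the forbidden configuration. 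Concretely, the hyperedges $S_0\cup S_{n/2}$ and $S_2\cup S_{n/2+1}$ intersect in $\{2\}\cup\{\frac n2+1,\frac n2+2\}$, a three-element chord, so $EI(\cH)\neq C_n$. The defect is not a boundary issue and cannot be relabelled away: viewed from the other half, $\sigma^{-1}$ sends the adjacent values $t,t+1$ to $2t,2t+2$, so the same violation recurs throughout the cycle. What you need is an involution for which \emph{both} $\sigma$ and $\sigma^{-1}$ spread every distance-$\le 2$ pair to distance $\ge 3$ (plus the cross-half boundary checks), and no such $\sigma$ is exhibited.

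This is not a minor verification you can defer: the paper's own proof shows the choice of matching is delicate. Its natural matching fails for $n=32,\dots,38$ (see the note in case B5 of Step 3 of the proof of Lemma 1, and Remark 1), forcing a separate construction (Lemma 2) in which two second $3$-sections are swapped, and confirming that a given involution works costs the full case analysis B1--B10. So the final sentence of your part (i) — ``completing this finite verification'' — is exactly the content of the theorem, and for the permutation as stated that verification is false. Your part (ii) scheme (one standalone $3$-section $S_{j_0}$ plus a matching of the remaining $n-1$ sections) is a reasonable alternative to the paper's route (the paper instead inserts one new vertex into the even-case solution, adds the hyperedge $\{3,n,4\}$, and modifies two old hyperedges), but it inherits the same unresolved crux from part (i).
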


Note that the lower bound $\frac{n}{2}$ for the cardinality $| \cE|$  given in Theorem \ref{Tmin} is sharp due to Theorem \ref{Teven}(i).

Depending on $n$, the verification of Theorem \ref{Teven} will be done by proving the following lemmata.

\begin{lemma}
\label{Leven}
Let $k, l, n \in \N$ with $k \ge 3, k \neq 4, l \in \{ 0,2,4,6 \}$ and $n = 8 k + l$. \\
Then there exists a 3-regular, 6-uniform hypergraph $\cH=(V, \cE)$  with  $EI(\cH) = C_n$ and $| \cE | = \frac{n}{2}$.
\end{lemma}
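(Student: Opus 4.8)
The plan is to give an explicit, periodic construction of the hypergraph $\cH$ and then verify that its edge intersection hypergraph is exactly $C_n$. Since $n = 8k + l$ with $l \in \{0,2,4,6\}$, the number of hyperedges must be $\frac{n}{2} = 4k + \frac{l}{2}$, so the natural idea is to tile the vertex set $V = \{1, 2, \ldots, n\}$ with a repeating pattern of hyperedges whose ``period'' covers $8$ vertices using $4$ hyperedges, and then handle the residual $l$ vertices (contributing $\frac{l}{2}$ extra hyperedges) by a small local modification. The key design constraint comes directly from the proof of Theorem~\ref{Tmin}: to reach the bound $|\cE| = \frac{n}{2}$ with equality we need $\frac{1}{2}\sum_{e} k_e = n$, i.e. the average value of $k_e$ over the $\frac{n}{2}$ hyperedges must be exactly $4$. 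So I would design each hyperedge to half-generate exactly $4$ edges of $C_n$, using the $k_e = 4$ templates from the theorem (a $5$-section plus a $1$-section, a $4$-section plus a $2$-section, or two $3$-sections).

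First I would fix the building block. The most symmetric choice is to make each hyperedge consist of two $3$-sections (case 2(c)), i.e. $e = \{a,a+1,a+2\} \cup \{b,b+1,b+2\}$ for suitable offsets, since a $3$-section half-generates two consecutive edges and two disjoint $3$-sections give exactly $k_e = 4$. The task then reduces to a covering/pairing problem: every edge $\{j, j+1\} \in E(C_n)$ must be half-generated by at least two hyperedges (so that $\{j,j+1\}$ actually appears as an intersection $e \cap e'$), and crucially no chord may be created. The no-chord condition is the delicate one: whenever two hyperedges $e, e'$ meet, $|e \cap e'| \le 2$ and the intersection must be a single edge $\{i, i+1\}$, never a pair of non-adjacent vertices or a set of size $\ge 3$. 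I would therefore describe the construction by listing, for one period of $8$ consecutive vertices, the $4$ hyperedges and their $3$-section offsets (as a fixed pattern of residues mod $8$, shifted by multiples of $8$), chosen so that (a) every $C_n$-edge is covered by exactly two of the $3$-sections coming from distinct hyperedges, and (b) any two $3$-sections belonging to different hyperedges overlap in at most two consecutive vertices.

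Next I would verify the three required properties of the resulting $\cH$. Regularity: each vertex lies in a $3$-section of some hyperedges; with two $3$-sections per hyperedge and the periodic offsets chosen symmetrically, each vertex is covered exactly $3$ times, giving $3$-regularity. Uniformity: two $3$-sections that stay disjoint yield $|e| = 6$, so I must ensure the two $3$-sections inside a single hyperedge never abut or overlap (their offsets differ by at least $4$ mod $n$). The main verification is $EI(\cH) = C_n$: I would check, edge by edge around one period, that every $\{j, j+1\}$ equals $e \cap e'$ for some pair, and then argue by the translation symmetry (shift by $8$) that this extends to all of $C_n$; simultaneously I must rule out spurious hyperedges in $\eiE$, i.e. show no two hyperedges intersect in a chord or in a set of size $\ge 3$. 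The residual part, where $l \in \{2,4,6\}$ forces $\frac{l}{2}$ extra hyperedges, is handled by splicing in one, two, or three specially tailored hyperedges at the ``seam'' and checking the boundary intersections by hand; the exclusion $k \neq 4$ and the threshold $k \ge 3$ presumably reflect that these small or exceptional periods would make the seam modification collide with the main pattern.

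The hard part will be the no-chord verification at the interface between consecutive periods and, especially, at the seam where the residual hyperedges are inserted: away from the seam the periodicity makes the intersection pattern routine, but I expect the bulk of the genuine case analysis to lie in confirming that the few modified hyperedges near the seam meet their neighbors in exactly one $C_n$-edge and never create an unwanted chord or a size-$3$ intersection. This is precisely where the parameter restrictions ($k \ge 3$, $k \neq 4$, and the four values of $l$) earn their keep, and I would organize the proof so that Lemma~\ref{Leven} dispatches the four residue classes by a uniform pattern plus four short, explicit seam checks.
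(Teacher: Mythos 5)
Your outline follows the same general route as the paper's proof---each hyperedge is a union of two disjoint 3-sections, the hyperedges come in blocks of four per eight consecutive vertices, and the residual $l$ vertices contribute $\frac{l}{2}$ extra hyperedges---but it has a genuine gap: the existence of an offset pattern satisfying your conditions (a) and (b) is precisely the content of the lemma, and you never exhibit one. This is not a notational omission. In the paper's construction the second 3-sections cannot stay inside their own period: for the 4-group $G_{\tj}$ whose first 3-sections start at vertex $8\tj+1$, the second 3-sections sit at $\{8\tj+7,\ldots,8\tj+9\}$, at $\{8\tj+16,\ldots,8\tj+18\}$ (inside the \emph{next} group), at $\{8\tj+13,\ldots,8\tj+15\}$ and at $\{8\tj-2,\ldots,8\tj\}$ (inside the \emph{previous} group), and chord-freeness is then established by a ten-case analysis (B1--B10 in the paper) of how 3-sections of distinct groups can meet---including coincidences modulo $n$ caused by wrap-around, not just the pattern of residues modulo $8$. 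A translation-symmetry argument "per period" does not substitute for this, because the dangerous intersections are exactly the non-local, wrap-around ones.

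That last point is where your plan concretely misreads the difficulty: you guess that the exclusions $k \ge 3$, $k \neq 4$ reflect collisions between the seam and the main pattern, but the exclusion $k \neq 4$ is already forced in the seamless case $l=0$, $n=32$. In the paper's case B5, the pair $e_{j+1}$, $e_{j'+1}$ with $\tj = \widetilde{j'}+2$ leads to the candidate intersection $\{8\widetilde{j'}+32, 8\widetilde{j'}+33, 8\widetilde{j'}+34\} \cap \{8\widetilde{j'}+2, 8\widetilde{j'}+3, 8\widetilde{j'}+4\}$, which is empty for $n=24$ and $n\ge 40$ but equals $\{8\widetilde{j'}+2\}$ when $n=32$ (since $8\widetilde{j'}+34 \equiv 8\widetilde{j'}+2 \bmod 32$), producing a chord; this is why $n = 32+l$ requires a genuinely different pattern (Lemma 2 swaps the second 3-sections of $e_{j+1}$ and $e_{j+2}$), not a seam repair. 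Since your proposal neither fixes the offsets nor performs the modular case analysis that would expose such wrap-around chords, it cannot distinguish a correct pattern from a failing one, and hence does not yet prove the lemma; note also that the paper handles $l \in \{2,4,6\}$ not by a direct seam check but by an induction that adds two vertices and one hyperedge to the $(n-2)$-vertex hypergraph, relabels, and modifies the second 3-sections of four specific hyperedges.
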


\begin{lemma}
\label{Lk4}
Let $l, n \in \N$ with $l \in \{ 0,2,4,6 \}$ and $n = 32 + l$. \\
Then there exists a 3-regular, 6-uniform hypergraph $\cH=(V, \cE)$  with  $EI(\cH) = C_n$ and $| \cE | = \frac{n}{2}$.
\end{lemma}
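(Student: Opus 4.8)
The plan is to dispose of the four values left open by Lemma~\ref{Leven}, namely $n \in \{32, 34, 36, 38\}$ (these are exactly the cases $n = 8k + l$ with $k = 4$ and $l \in \{0,2,4,6\}$), by exhibiting for each one an explicit hypergraph and checking the required properties directly. The building principle is the one used throughout: every hyperedge is a union of two $3$-sections $S_a = (a, a+1, a+2)$ and $S_b = (b, b+1, b+2)$ of $C_n$ chosen disjoint and non-adjacent, so that $|e| = 6$. I would use each of the $n$ sections $S_i = (i, i+1, i+2)$, $i \in V$, exactly once, pairing them into $\frac{n}{2}$ hyperedges.

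Once such a pairing is fixed, three of the four requirements come for free. Each hyperedge has cardinality $6$ by construction, and there are $\frac{n}{2}$ of them; moreover every vertex $v$ lies in exactly three of the sections, namely $S_{v-2}, S_{v-1}, S_v$, and since these pairwise overlap no two of them can be paired together, so $v$ ends up in three distinct hyperedges and $\cH$ is $3$-regular. Hence only $EI(\cH) = C_n$ remains. For the generation of the edges I would use that each $C_n$-edge $\{i, i+1\}$ is contained in precisely the two consecutive sections $S_{i-1}$ and $S_i$; these lie in two distinct hyperedges $e, e'$, and provided $e \cap e' = \{i, i+1\}$ the edge appears in $EI(\cH)$. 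The pairing must therefore be arranged so that, for every $i$, the ``partner'' sections paired with $S_{i-1}$ and $S_i$ contribute nothing extra to $e \cap e'$.

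The main obstacle is chord-avoidance: I must rule out two hyperedges $e = S_p \cup S_q$ and $e' = S_r \cup S_s$ with $|e \cap e'| \ge 2$ where $e \cap e'$ is not a $C_n$-edge. Since each hyperedge consists of two sections, $e \cap e'$ is the union of the four section overlaps $S_p \cap S_r$, $S_p \cap S_s$, $S_q \cap S_r$, $S_q \cap S_s$, and a forbidden chord can arise in only two ways: either one of these overlaps has size $\ge 2$ for a non-consecutive pair, or two distinct overlaps each contribute a single vertex in two separated regions of the cycle, producing a pair of non-adjacent vertices. I would choose the offsets between the paired sections large enough that non-consecutive sections never meet in two vertices (killing the first possibility) and then verify, pair by pair, that no two hyperedges meet once in each of two distant places (the second). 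Because only four small values of $n$ are in play, this is a finite inspection of the $\binom{n/2}{2}$ hyperedge pairs for each $n$; the real work lies in choosing, for the awkwardly small $n \in \{32, 34, 36, 38\}$, four explicit pairings that simultaneously generate every $C_n$-edge and create no chord, and then carrying out this bookkeeping completely.
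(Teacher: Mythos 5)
Your framework is sound and matches the paper's: build each hyperedge as a union of two disjoint, non-adjacent $3$-sections, use each of the $n$ sections $S_i=(i,i+1,i+2)$ exactly once, and observe that $6$-uniformity, $|\cE|=\frac{n}{2}$ and $3$-regularity then come for free, leaving only the finite verification of $E(C_n)=\cE(EI(\cH))$. But there is a genuine gap: you never produce the pairings. Your proposal ends by stating that ``the real work lies in choosing \dots four explicit pairings \dots and then carrying out this bookkeeping completely,'' which is precisely the content of the lemma; a plan that defers the construction and its verification is not a proof of an existence statement. The point of Lemma~\ref{Lk4} is that the case $k=4$ is genuinely awkward: the natural pairing of Lemma~\ref{Leven} fails here because of wrap-around coincidences modulo $32$ (e.g.\ in case B5 of the paper's Step~3, $8\widetilde{j'}+34$ and $8\widetilde{j'}+2$ name the same vertex when $n=32$, producing a chord; Remark~1 exhibits the concrete chord $e_2\cap e_7=\{2,13\}$ for the sibling construction). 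So chord-freeness is not something one can wave at by taking ``offsets large enough'' --- at $n=32$ there is not enough room, and a specific, carefully adjusted pairing is required.

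For comparison, the paper resolves this by a concrete modification of the Lemma~\ref{Leven} construction: within every $4$-group or $5$-group $G_{\widetilde{j}}=\{e_j,e_{j+1},e_{j+2},\dots\}$ it swaps the second $3$-sections of $e_{j+1}$ and $e_{j+2}$, writes out the resulting hyperedges in closed form (cases (I)--(IV)), and then, exploiting that only $n\in\{32,34,36,38\}$ occur, exhibits the full edge lists for $n=32$ and $n=34$ and checks the intersections, the other two values being analogous. To repair your argument you would need to do the analogous thing: exhibit, for each of the four values of $n$, an explicit pairing (your own or the paper's swapped one) and actually carry out the two checks you correctly identified --- that consecutive sections lie in hyperedges meeting exactly in the corresponding $C_n$-edge, and that no pair of hyperedges meets in a chord.
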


\begin{lemma}
\label{Lodd}
Let $n \in \N$ be odd. \\
Then there exists a 3-regular hypergraph $\cH$ with  $EI(\cH) = C_n$, $|\cE| = \frac{n+1}{2}$, $\cH$ contains one hyperedge
%${\underline e}$
of cardinality 3 and all other hyperedges in $\cH$ have cardinality 6.
\end{lemma}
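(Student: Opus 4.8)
The plan is to build $\cH$ entirely from the \emph{3-sections} $S_i=(i,i+1,i+2)$, $i\in V$, exactly as announced before Theorem~\ref{Tmin}. A counting identical to the one in the proof of Theorem~\ref{Tmin} shows that a hypergraph meeting the required profile is forced to be tight: the single cardinality-$3$ hyperedge must be one 3-section (so $k_e=2$), every cardinality-$6$ hyperedge must be a disjoint, non-adjacent union of two 3-sections (so $k_e=4$), and each edge $\{j,j+1\}$ of $C_n$ must be half-generated exactly twice. Since $1+2\cdot\frac{n-1}{2}=n$ and the $n$ edges are to be double-covered, I would use \emph{all} $n$ 3-sections once, so that $\{j,j+1\}$ is covered precisely by $S_{j-1}$ and $S_j$; this automatically yields $3$-regularity, because each vertex $v$ lies in exactly the three sections $S_{v-2},S_{v-1},S_v$.

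This reduces the whole problem to a purely combinatorial one: choose a \emph{near-perfect matching} $M$ on the cyclic index set $\{1,\dots,n\}$ (the one unmatched index becoming the cardinality-$3$ hyperedge, each matched pair becoming a $6$-uniform hyperedge $S_a\cup S_b$) subject to two conditions. First, (R1) any two matched indices $a,b$ have cyclic distance at least $4$, so that $S_a\cup S_b$ really is a union of two separated 3-sections of size $6$; note that (R1) already forces the consecutive sections $S_{j-1},S_j$ into distinct hyperedges, hence every edge of $C_n$ is genuinely generated as an intersection. Second, (R2) any two hyperedges have at most one pair of constituent 3-sections with non-empty intersection, and that intersection has size at most $2$. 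Since a single shared vertex is invisible to $EI(\cH)$ (which records only intersections of size $\ge 2$), the only way a chord could arise is from two simultaneous section-overlaps, and (R2) rules exactly this out, leaving each realized intersection equal to one edge $\{j,j+1\}$ of $C_n$.

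For the matching itself I would write $n$ as a base length plus a small remainder and use a periodic ``block'' pattern: a repeating stretch of vertices carrying a fixed, well-separated pairing of its sections (so that within and across consecutive blocks only the intended adjacent-section overlaps of size $2$ occur), together with a few boundary hyperedges around the unmatched section. The single leftover 3-section is exactly the device that absorbs the parity of the odd length and lets the periodic pattern close up smoothly for \emph{every} odd $n$, which is why --- unlike the even Lemmas~\ref{Leven} and~\ref{Lk4} --- no split into residue classes of $n$ is needed in the statement.

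The main obstacle is verifying (R2), and specifically the chord-freeness at the \emph{seam} where the periodic pattern wraps around and meets the cardinality-$3$ hyperedge; in the bulk the pairing is translation-invariant, so only finitely many overlap types (those occurring within one period and between adjacent periods) have to be checked, but near the seam the boundary hyperedges must be verified individually. I would therefore (i) confirm (R1) and the double-cover by a direct count, (ii) prove chord-freeness in the periodic interior once and for all by inspecting the constant set of relative offsets, and (iii) finish by checking the $O(1)$ boundary hyperedges by hand. An alternative I would keep in reserve is to reduce to the even case: take the $6$-uniform construction underlying Lemma~\ref{Leven} or Lemma~\ref{Lk4} on a suitable even length and perform a local surgery at one vertex that inserts the cardinality-$3$ hyperedge, transferring chord-freeness from the even construction to the odd one.
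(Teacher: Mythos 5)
Your framework is sound, and in fact the paper's hypergraph fits it exactly: all $n$ 3-sections of $C_n$ are used exactly once, one of them serves as the cardinality-3 hyperedge, the remaining $n-1$ are paired into 6-element hyperedges, and your conditions (R1)/(R2) do guarantee 3-regularity and $EI(\cH)=C_n$. But as a proof there is a genuine gap: the matching is never exhibited. The entire difficulty of this lemma (and of Lemmas~\ref{Leven} and~\ref{Lk4}) lies in writing down a concrete pairing and verifying (R2) for it, and the even case shows how delicate this is --- the paper needs the residue split $n=8k+l$, $l\in\{0,2,4,6\}$, and the pairing that works for $k\neq 4$ produces a chord precisely when $n=32+l$ (see the note at the end of B5 in Step 3 of 2.2.1.1 and Remark 1), forcing a separate construction there. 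Your claim that a periodic block pattern ``closes up smoothly for every odd $n$'' with no residue analysis is exactly the kind of assertion that can fail at the seam without anyone noticing; nothing in the proposal rules out the analogue of the $n=32+l$ accident. An unspecified pattern plus a promise to check finitely many offsets is a plan, not a proof.

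The one-sentence ``alternative kept in reserve'' is in fact the paper's actual proof, and it is by far the easier route to complete: take the even construction $\cH'=(V',\cE')$ on $n-1$ vertices from Lemma~\ref{Leven} or~\ref{Lk4}, note that vertex 3 lies only in $e_1',e_2',e_3'$ and vertex 4 only in $e_2',e_3',e_4'$ (always in first 3-sections), insert a new vertex $n$ between 3 and 4 on the cycle, set $e_2=(e_2'\setminus\{4\})\cup\{n\}$ and $e_3=(e_3'\setminus\{3\})\cup\{n\}$, keep all other hyperedges, and add the new hyperedge $\{3,n,4\}$. Chord-freeness then transfers from the even case by a purely local check: the new hyperedge meets $e_2$ and $e_3$ in the two new cycle edges $\{3,n\}$ and $\{n,4\}$, meets $e_1$ and $e_4$ in single vertices, and meets nothing else; the modified $e_2,e_3$ create no new size-$\ge 2$ intersections. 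So $EI(\cH)=C_n$ follows from $EI(\cH')=C_{n-1}$ with essentially no case analysis, and 3-regularity and the cardinality count are immediate. Had you developed that reduction instead of the unspecified periodic matching, your argument would be complete.
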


Lemma 1, 2 and 3  will be shown separately in Subsection 2.2.1, 2.2.2 and 2.2.3, respectively. In every case, the proof requires the following steps.

\begin{description}
\item[\bf\ul{Step 1.}] Construct the hyperedges of $\cH = (V, \cE(\cH))$.
\item[\bf\ul{Step 2.}]  Verify that the hyperedges of $\cH$ generate all edges of $C_n$: $E(C_n) \subseteq \cE( EI(\cH))$.
\item[\bf\ul{Step 3.}]  Verify that the hyperedges of $\cH$ do not generate any chord in $C_n$: $\cE( EI(\cH)) \subseteq E(C_n)$.
\end{description}

\medskip

\subsubsection{Proof of Lemma 1}
\label{Sec_L1}
$\/$

\bigskip
\noindent
{\bf\ul{Step 1.}} Construction of the set of hyperedges $\cE(\cH)$ of the hypergraph $\cH$.

\medskip
At first we give a rough description of the construction principle for the hyperedges.

\medskip
In the {\ul{\em basic construction}} (this corresponds to  $l=0$) for $j \in \{1,5,9, \ldots, \frac{n}{2} - 3 \}$ and $\widetilde{j} = \frac{j-1}{4} \in \{ 0,1, \ldots, \frac{n}{8} - 1 \}$ we form so-called {\em $4$-groups}
$G_{\widetilde{j}} = \{ e_j, e_{j+1}, e_{j+2}, e_{j+3} \}$ of hyperedges. Each of the constructed hyperedges consists of two 3-sections on $C_n$, in detail the hyperedges will have the structure  $e = \{ p', p'+1, p'+2, q', q'+1, q'+2 \}$ with $p', q' \in V$ and $| q' - p' | \ge 6$. Note that we take the vertices of $V = \{ 1,2, \ldots, n \}$ modulo $n$, therefore $| q' - p' | \ge 6$ is meant in the sense that the distance between $p'$ and $q'$ on the cycle $C_n$ is at least 6.

For every $j \in  \{1,5,9, \ldots, \frac{n}{2} - 3 \}$ the so-called {\em first $3$-sections} of the hyperedges $ e_j, e_{j+1}, e_{j+2}, e_{j+3} $ in such a 4-group overlap each other in the following way: \\[0.5ex]
$e_{j \phantom{+1}} = \{ p, p+1, p+2, \ldots \}$,\\
$e_{j+1} = \{ p+1, p+2, p+3, \ldots \}$,\\
$e_{j+2} = \{ p+2, p+3, p+4, \ldots \}$,\\
$e_{j+3} = \{ p+3, p+4, p+5, \ldots \}$, for certain $p \in V$.

\smallskip
This property of overlapping (by two vertices, if we consider $e_k$ and $e_{k+1}$ ($k \in \{j, j+1, j+2\}$)) determines the first 3-sections of the hyperedges uniquely, since
the other 3-sections  do not overlap. These other 3-sections are referred  to as the {\em second $3$-sections} of the hyperedges.
Considering those  $3$-sections, we will see that no hyperedge $e \in G_{\widetilde{j}}$ has a non-empty intersection with a second $3$-section of any of the other hyperedges $e' \in G_{\widetilde{j}} \setminus \{ e \}$.

\medskip
In the  {\ul{\em supplemental construction}} (corresponding to $l \in \{ 2,4,6 \}$), we replace  $\frac{l}{2}$ of the 4-groups $G_{\widetilde{j}}$   by {\em $5$-groups} $G_{\widetilde{j}} = \{ e_j, e_{j+1}, e_{j+2},$ $ e_{j+3}, e_{j+4} \}$ of hyperedges. In comparison with a 4-group $G_{\widetilde{j}} = \{ e_j, e_{j+1}, e_{j+2}, e_{j+3} \}$  in the basic construction (see above) we add  yet another hyperedge $e_{j+4} = \{ p'', p''+1, p''+2, q'', q''+1, q''+2 \}$ in order to obtain the needed 5-group. Looking at the detailed definitions of the hyperedges, later we will see that most of the properties described above for the hyperedges in the 4-groups will be preserved for the hyperedges in the new 5-groups, only little modifications will appear.

So $e_{j+4}$ will continue the overlapping of the first 3-sections; using the number $p$ from above we have  $e_{j+4} = \{ p+4, p+5, p+6, \ldots \}$. For $e = \{ p', p'+1, p'+2, q', q'+1, q'+2 \} \in \{ e_j, e_{j+1}, e_{j+2}, e_{j+3} \}$ the inequality $| q' - p' | \ge 6$ remains valid; for $e_{j+4} = \{ p'', p''+1, p''+2, q'', q''+1, q''+2 \}$ we obtain $| q'' - p'' | \ge 5$. The non-overlapping property of the second 3-sections of $e \in \{ e_j, e_{j+1}, e_{j+2}, e_{j+3} \}$ and the other hyperedges of the 4-group (see above) is preserved also for the new 5-groups $G_{\widetilde{j}} = \{ e_j, e_{j+1}, e_{j+2}, e_{j+3}, e_{j+4} \}$, only for $e_{j+4} = \{ p'', p''+1, p''+2, q'', q''+1, q''+2 \}$, $e_{j} = \{ p, p+1, p+2, \ldots \}$  and $e_{j+1} = \{ p+1, p+2, p+3, \ldots \}$ we have $e_{j+4} \cap e_{j} = \{ p = q'' + 1, p + 1 = q'' + 2 \}$ and $e_{j+4} \cap e_{j + 1} = \{ p + 1 = q'' + 2 \}$.

\bigskip
Now we give the detailed definitions of the hyperedges.
We begin with the $\frac{l}{2}$ 5-groups of hyperedges, where $ l \in \{ 0,2,4,6 \}$.

\medskip

\noindent
\underline{(I): $\tj \in \{ 0,1, \ldots, \frac{l}{2} - 2 \}$ \; or \; $n=30 \, \wedge \, \tj = \frac{l}{2} - 1.$}

\smallskip
Let $j = 5 \tj + 1$, i.e. $j \in \{ 1,6,11 \}$ (note that $j=11$ is possible only for $n=30$). \\[1ex]
$e_{j \phantom{+1}}= \{ 10 \tj+1, 10 \tj+2, 10 \tj+3, 10 \tj+8, 10 \tj+9, 10 \tj+10 \}, $\\[0.5ex]
$e_{j+1}= \{ 10 \tj+2, 10 \tj+3, 10 \tj+4, 10 \tj+19, 10 \tj+20, 10 \tj+21 \}, $\\[0.5ex]
$e_{j+2}= \{ 10 \tj+3, 10 \tj+4, 10 \tj+5, 10 \tj+16, 10 \tj+17, 10 \tj+18 \}, $\\[0.5ex]
$e_{j+3}= \{ 10 \tj+4, 10 \tj+5, 10 \tj+6, 10 \tj-3, 10 \tj-2, 10 \tj-1 \}, $\\[0.5ex]
$e_{j+4}= \{ 10 \tj+5, 10 \tj+6, 10 \tj+7, 10 \tj, 10 \tj+1, 10 \tj+2 \}. $

\medskip
Whereas in (I) we have 0, 1 or 2 such 5-groups $G_{\widetilde{j}}$ (depending on $l \in \{ 0,2,4 \}$) --  with the exception $n=30$, where three 5-groups can occur -- the following case (II) describes only one 5-group, namely the {\em largest} 5-group $G_{\frac{l}{2} - 1}$, which is the 5-group with the largest index $\widetilde{j} = \frac{l}{2} - 1 $.

\bigskip

\noindent
\underline{(II): $n \neq 30 \, \wedge \, l \ge 2 \, \wedge \, \tj = \frac{l}{2} - 1.$}

\smallskip
 $j = 5 \tj + 1$, $e_j$, $e_{j+3}$ and $e_{j+4}$ are the same as in (I).
 Since the next group  $G_{\frac{l}{2}}$ after  $G_{\frac{l}{2} - 1}$ is a 4-group, we have to modify $e_{j+1}$ and $e_{j+2}$ as follows. \\[1ex]
 $e_{j+1}= \{ 10 \tj+2, 10 \tj+3, 10 \tj+4,10 \tj+17, 10 \tj+18, 10 \tj+19 \}, $\\[0.5ex]
$e_{j+2}= \{ 10 \tj+3, 10 \tj+4, 10 \tj+5, 10 \tj+14, 10 \tj+15, 10 \tj+16 \}.$

\bigskip
Because of $| \cE | = \frac{n}{2}$ and  since we have defined $\frac{l}{2}$ of such 5-groups, namely\\[1ex]
$ \begin{array}{l@{\, = \,}l}
G_{0} & \{ e_1, e_2, e_3, e_4, e_5 \}, \\
G_{1} & \{ e_6, e_7, e_8, e_9, e_{10} \}, \ldots, \\
G_{\frac{l}{2} - 1} & \{ e_{\frac{5}{2}l - 4}, e_{\frac{5}{2}l - 3}, e_{\frac{5}{2}l - 2}, e_{\frac{5}{2}l - 1}, e_{\frac{5}{2}l} \},
\end{array} $ \\[1ex]
we need $\frac{1}{4} ( \frac{n}{2} - \frac{5}{2}l) = \frac{1}{8} (n - 5 \, l)$ 4-groups, in detail\\[1ex]
$ \begin{array}{l@{\, = \,}l}
G_{\frac{l}{2}} & \{ e_{\frac{5}{2}l + 1}, e_{\frac{5}{2}l + 2}, e_{\frac{5}{2}l + 3}, e_{\frac{5}{2}l + 4} \}, \\
G_{\frac{l}{2} + 1 } & \{ e_{\frac{5}{2}l + 5}, e_{\frac{5}{2}l + 6}, e_{\frac{5}{2}l + 7}, e_{\frac{5}{2}l + 8} \}, \ldots, \\
G_{\frac{n-l}{8} - 1} & \{ e_{ \frac{n}{2} - 3}, e_{\frac{n}{2} - 2}, e_{\frac{n}{2} - 1}, e_{\frac{n}{2}} \}.
\end{array} $ \\

\smallskip

Here are the 4-groups.

\bigskip

\noindent
\underline{(III): $\tj \in \{ \frac{l}{2}, \frac{l}{2} + 1 \ldots, \frac{n-l}{8} - 2\}$ \; or \; $  l=0   \, \wedge \,  \tj = \frac{n-l}{8} - 1.$}

\smallskip
Let $j = 4 \tj + \frac{l}{2} + 1$, i.e. $j \in \{ \frac{5}{2}l + 1, \frac{5}{2}l + 5, \ldots, \frac{n}{2} - 7, \frac{n}{2} - 3 \}$ (note that $j=  \frac{n}{2} - 3 $ is possible only for $l=0$).

With \\[-2ex]
\hspace*{5cm} $x = \left\{ \begin{array}{l@{\quad,\quad}l}
 8 \tj & l=0 \\[0.8ex] 8 \tj + l - 1 & l>0
 \end{array} \right.$ \\

 we define the hyperedges of $G_{\tj}$. \\[1ex]
 $e_{j \phantom{+1}}= \{ x+1, x+2, x+3, x+7, x+8, x+9 \}, $\\[0.5ex]
$e_{j+1}= \{ x+2, x+3, x+4, x+16, x+17, x+18 \}, $\\[0.5ex]
$e_{j+2}= \{ x+3, x+4, x+5, x+13, x+14, x+15 \}, $\\[0.5ex]
$e_{j+3}= \{ x+4, x+5, x+6, x-2, x-1, x \}. $

\bigskip

\noindent
\underline{(IV): $l \ge 2 \, \wedge \,  \tj = \frac{n-l}{8} - 1.$}

\smallskip
In comparison with (III) we have to modify only the second and the third hyperedge; we set\\[1ex]
$e_{j+1}= \{ x+2, x+3, x+4, x+18, x+19, x+20 \} $ \quad and\\[0.5ex]
$e_{j+2}= \{ x+3, x+4, x+5, x+15, x+16, x+17 \}. $

\medskip

Owing to  $j = \frac{n}{2} - 3$ and $ x = 8 \tj + l - 1 = n - 9$  we obtain finally\\[1ex]
 $e_{\frac{n}{2} - 3}= \{ n-8,n-7, n-6, n-2,n-1,n \}, $\\[0.5ex]
$e_{\frac{n}{2} - 2}= \{ n-7,n-6, n-5, 9,10,11 \}, $\\[0.5ex]
$e_{\frac{n}{2} - 1}= \{ n-6,n-5, n-4, 6,7,8 \}, $\\[0.5ex]
$e_{\frac{n}{2} \phantom{ - 1}}= \{ n-5,n-4, n-3, n-11,n-10,n-9  \}. $

\bigskip
Thus the construction of the hyperedges of $\cH$ is complete.

\medskip
In the construction of $\cE(\cH)$ we  gave the definitions of the hyperedges in a -- more or less -- formal and compact form.
In order to verify that $EI(\cH)$ contains all edges of $C_n$ (Step 2) but no chords (Step 3), it is more favorable to deviate from the above and handle the cases $l = 0 $ and $l \in \{ 2,4,6 \}$ separately. \\[1ex]

%In order to attack Step 2 and Step 3, we deviate from the above and handle the cases $l = 0 $ and $l \in \{ 2,4,6 \}$ separately. The reason is that in the construction of $\cE(\cH)$ we only gave the definitions of the hyperedges in a formal and compact form. Now we have to verify that $EI(\cH)$ contains all edges of $C_n$ (Step 2) but no chords (Step 3). This is much easier by treating not so complex but rather simpler cases what we will do here. \\[1ex]

\paragraph{2.2.1.1 Basic construction: $l = 0 $.}
\label{Sec_L11}
$\/$

\medskip
First,  Step 1 (see (I)--(IV) above) can be adapted to the case $l=0$.

\medskip
 \noindent
{\bf\ul{Step 1.}} Construction of the set of hyperedges $\cE(\cH)$ of the hypergraph $\cH$.

\medskip
Let $j \in \{1,5,9, \ldots, \frac{n}{2} - 3 \}$ and $\widetilde{j} = \frac{j-1}{4} \in \{ 0,1, \ldots, \frac{n}{8} - 1 \}$.
Considering the 4-groups

\smallskip
\noindent
$ \begin{array}{l@{\, = \,}l}
G_{0} & \{ e_1, e_2, e_3, e_4 \}, \\
G_{1} & \{ e_5, e_6, e_7, e_8 \}, \ldots, \\
G_{\frac{n}{8} - 1} & \{ e_{ \frac{n}{2} - 3}, e_{\frac{n}{2} - 2}, e_{\frac{n}{2} - 1}, e_{\frac{n}{2}} \},
\end{array} $ \\

the 4-group $G_{\tj}$ contains the hyperedges \\[1ex]
$ \begin{array}{l@{\, = \, \{ \,}l@{, \,}l@{, \,}l@{, \,}l@{, \,}l@{, \,}l@{\, \},}}
e_{j}   & {\bf 8 \tj +1} & {\bf 8 \tj +2} & {\bf 8 \tj +3} &  8 \tj +7  & 8 \tj +8  & 8 \tj +9 \\[0.5ex]
e_{j+1} & {\bf 8 \tj +2} & {\bf 8 \tj +3} & {\bf 8 \tj +4} &  8 \tj +16 & 8 \tj +17 & 8 \tj +18 \\[0.5ex]
e_{j+2} & {\bf 8 \tj +3} & {\bf 8 \tj +4} & {\bf 8 \tj +5} &  8 \tj +13 & 8 \tj +14 & 8 \tj +15 \\[0.5ex]
e_{j+3} & {\bf 8 \tj +4} & {\bf 8 \tj +5} & {\bf 8 \tj +6} &  8 \tj -2  & 8 \tj -1  & 8 \tj
\end{array} $ \\[1ex]

where
$j = 4 \tj  + 1$.

In each hyperedge $e_\tau$, the vertices printed bold are the vertices of the {\em first 3-section} of the hyperedge $e_\tau$ (see  2.2.1).

Remember that we take the numbers  of the vertices $1,2, \ldots, n$  modulo $n$, the indices of the hyperedges $e_1, e_2, \ldots, e_{\frac{n}{2}}$  modulo $\frac{n}{2}$ and the indices of the 4-groups
$ G_{0}, G_{1}, \ldots,
G_{\frac{n}{8} - 1}$ modulo $\frac{n}{8}$.

As an instance for the construction of the hyperedges, we choose $n=24$ and consider the hypergraph $\cH = (V, \{e_1, e_2, \ldots, e_{12} \})$ with $EI(\cH) = C_{24}$. This may also ease the understanding of Step 2 and Step 3  below.

\smallskip
\begin{example}
$\cH = (V, \{e_1, e_2, \ldots, e_{12} \})$ with $EI(\cH) = C_{24}$ has the following hyperedges.\\[1ex]
$ \begin{array}{l@{\, = \, \{ \,}c@{, \,}c@{, \,}c@{, \,}c@{, \,}c@{, \,}c@{\, \},}}
e_{1} & {\bf 1} & {\bf 2} & {\bf 3} &  7  & 8  & 9 \\[0.5ex]
e_{2} & {\bf 2} & {\bf 3} & {\bf 4} & 16 & 17 & 18 \\[0.5ex]
e_{3} & {\bf 3} & {\bf 4} & {\bf 5} &  13 & 14 & 15 \\[0.5ex]
e_{4} & {\bf 4} & {\bf 5} & {\bf 6} &  22  & 23  & 24  \\[0.5ex]
e_{5} & {\bf 9} & {\bf 10} & {\bf 11} &  15  & 16  & 17 \\[0.5ex]
e_{6} & 1 & 2 & {\bf 10} & {\bf 11} & {\bf 12} & 24 \\[0.5ex]
e_{7} & {\bf 11} & {\bf 12} & {\bf 13} &  21 & 22 & 23 \\[0.5ex]
e_{8} & 6 & 7 & 8 &  {\bf 12}  & {\bf 13}  & {\bf 14}  \\[0.5ex]
e_{9} & 1 & {\bf 17} & {\bf 18} &  {\bf 19}  & 23  & 24 \\[0.5ex]
e_{10} & 8 & 9 & 10 & {\bf 18} & {\bf 19} & {\bf 20} \\[0.5ex]
e_{11} & 5 & 6 & 7 &  {\bf 19} & {\bf 20} & {\bf 21} \\[0.5ex]
e_{12} & 14 & 15 & 16 &  {\bf 20}  & {\bf 21}  & {\bf 22}
\end{array} $ \\[1.5ex]
and the edges of $C_{24}$ result from the intersections \\[0.5ex]
$e_1 \cap e_2 = \{2,3\}$, $e_2 \cap e_3 = \{3,4\}$, $e_3 \cap e_4 = \{4,5\}$,  $e_4 \cap e_{11} = \{5,6\}$, \\
 $e_8 \cap e_{11} = \{6,7\}$, $e_1 \cap e_8 = \{7,8\}$, $e_1 \cap e_{10} = \{8,9\}$, $e_5 \cap e_{10} = \{9,10\}$, \\ $e_5 \cap e_6 = \{10,11\}$, $e_{6} \cap e_{7} = \{11,12\}$, $e_7 \cap e_8 = \{12,13\}$, $e_3 \cap e_8 = \{13,14\}$, \\
   $e_3 \cap e_{12} = \{14,15\}$, $e_5 \cap e_{12} = \{15,16\}$,
 $e_2 \cap e_5 = \{16,17\}$, $e_2 \cap e_{9} = \{17,18\}$,\\
  $e_9 \cap e_{10} = \{18,19\}$, $e_{10} \cap e_{11} = \{19,20\}$, $e_{11} \cap e_{12} = \{20,21\}$, $e_7 \cap e_{12} = \{21,22\}$,\\
   $e_4 \cap e_7 = \{22,23\}$,  $e_4 \cap e_{9} = \{23,24\}$, $e_6 \cap e_9 = \{24,1\}$ and $e_1 \cap e_6 = \{1,2\}$.

\smallskip
For $i \neq j$, it can be shown easily that each of the remaining intersections $e_i \cap e_j$ of pairs $e_i, e_j \in \cE(\cH)$ contains less than two vertices and, therefore, the above edges of $C_{24}$ are the only (hyper-)edges in $EI(\cH)$, i.e. there are no chords in $EI(\cH)$ or - by other words - we have $EI(\cH)= C_{24}$.

\smallskip
Alternatively, the computation of the edge set   $\cE(EI(\cH))$ of the edge intersection hypergraph $EI(\cH)$ of a given hypergraph $\cH=(V, \cE)$ can be done   using the computer algebra system MATHEMATICA$^{\textregistered}$ with the function
\\[0.8ex]
\indent
$EEI[eh\_] :=
 Complement[
  Select[Union[Flatten[Outer[Intersection, eh, eh, 1], 1]],$
\vspace{-1ex}
\begin{flushright} $   Length[\#] > 1 \&], eh],$ \end{flushright}

\smallskip
\noindent
where the argument $eh$ has to be the list of the hyperedges of $\cH$ in the form $\{\{a_1,a_2, \ldots, a_{k_a} \}, \ldots,$ $ \{z_1, z_2, \ldots, z_{k_z} \} \}$. Then $EEI[eh]$ provides the list of the hyperedges of $EI(\cH)$.

Note that this function only works correctly if the hypergraph $\cH$ does not contain any hyperedges $e, e'$ with $e \subset e'$.
%Note that this function only works correctly if in $\cE$ there are no hyperedges $e \neq e'$ with $e \subseteq e'$.

\end{example}

\smallskip
Now we show\\[2ex]
{\bf\ul{Step 2.}} {\em   The hyperedges of $\cH$ generate all edges of $C_n$: $E(C_n) \subseteq \cE( EI(\cH))$.}

\medskip
We distinguish several subcases. In each subcase, the given equations are valid for all $\widetilde{j}  \in \{ 0,1, \ldots, \frac{n}{8} - 1 \}$ and $j  = 4 \tj  + 1 \in \{1,5,9, \ldots, \frac{n}{2} - 3 \}$.\\[2ex]

%\pagebreak
\noindent
\ul {\em $\mathbf{(\alpha)}$: Edges of $C_n$ generated only by the first 3-sections of hyperedges.} \\[1.3ex]
$\{ 8 \tj +2, 8 \tj +3 \} \, = \, e_j \, \cap e_{j+1} \, = \, e_{4 \tj  + 1} \, \cap e_{4 \tj  + 2} $,\\
$\{ 8 \tj +3, 8 \tj +4 \} \, = \, e_{j+1} \, \cap e_{j+2}\, = \, e_{4 \tj  + 2} \, \cap e_{4 \tj  + 3}$ \quad and\\
$\{ 8 \tj +4, 8 \tj +5 \} \, = \, e_{j+2} \, \cap e_{j+3}\, = \, e_{4 \tj  + 3} \, \cap e_{4 \tj  + 4}$.\\[2ex]
\ul {\em $\mathbf{(\beta)}$: Edges of $C_n$ generated by a first 3-section and a second 3-section of two hyperedges.} \\[1.3ex]
The edge $\{ 8 \tj +1, 8 \tj +2 \}$ and $\{ 8 \tj +5, 8 \tj +6 \}$ of $C_n$ is contained only in one first 3-section, namely in the
hyperedge $e_j = e_{4 \tj  + 1}$ and $e_{j+3} = e_{4 \tj  + 4}$, respectively.

With $\widetilde{j'} = \tj -2$ and $j' = 4 \widetilde{j'}  + 1 $ we obtain \\[0.5ex]
$\{ 8 \tj +1, 8 \tj +2 \} \, = \, \{ 8 \widetilde{j'} +17, 8 \widetilde{j'} +18 \} \, = \,  e_j \, \cap e_{j'+1} \, = \, e_{4 \tj  + 1} \, \cap e_{4 \widetilde{j'}  + 2}  \, = \, e_{4 \tj  + 1} \, \cap e_{4 \tj -6}$.

Analogously, with  $\widetilde{j'} = \tj -1$ and $j' = 4 \widetilde{j'}  + 1 $ it follows \\[0.5ex]
$\{ 8 \tj +5, 8 \tj +6 \} \, = \, \{ 8 \widetilde{j'} +13, 8 \widetilde{j'} +14 \} \, = \,  e_{j+3} \, \cap e_{j'+2} \, = \, e_{4 \tj  + 4} \, \cap e_{4 \widetilde{j'}  + 3}  \, = \, e_{4 \tj  + 4} \, \cap e_{4 \tj -1}$.\\[2ex]
\ul {\em $\mathbf{(\gamma)}$: Edges of $C_n$ generated only by the second 3-sections of two hyperedges.} \\[1.3ex]
The edges $\{8\widetilde{j} + 1,8\widetilde{j} + 2\}, \{8\widetilde{j} + 2,8\widetilde{j} + 3\}, \{8\widetilde{j} + 3,8\widetilde{j} + 4\},\{8\widetilde{j} + 4,8\widetilde{j} + 5\},\{8\widetilde{j} + 5,8\widetilde{j} + 6\}$ \\[0.5ex]
of $C_n$ have been generated in the subcases {\em $\mathbf{(\alpha)}$} and {\em $\mathbf{(\beta)}$}, for all $\widetilde{j}  \in \{ 0,1, \ldots, \frac{n}{8} - 1 \}$.

To complete the proof of Step 2, now we consider the edges $\{8\widetilde{j} + 6,8\widetilde{j} + 7\},$ \\[0.5ex]$\{8\widetilde{j} + 7,8\widetilde{j} + 8\}, \{8\widetilde{j} + 8,8\widetilde{j} + 9\}$.

\pagebreak
With $\widetilde{j'} = \widetilde{j} -1$ and $\widetilde{j''} = \widetilde{j} + 1 $ we get \\[0.5ex]
$\{ 8 \widetilde{j} + 6, 8 \widetilde{j} + 7 \} \, = \, \{ 8 \widetilde{j'}+14, 8\widetilde{j'} +15 \} \, = \, \{ 8 \widetilde{j''} - 2, 8 \widetilde{j''} - 1 \} \, $ \\[0.5ex]
  $= \, e_{j'+2} \, \cap e_{j''+3} \, = \, e_{4 \widetilde{j'}  + 3} \, \cap e_{4 \widetilde{j''}  + 4}  \, = \, e_{4 \tj  - 1} \, \cap e_{4 \tj + 8}$.

\smallskip
Setting $\widetilde{j'} = \widetilde{j}$ and $\widetilde{j''} = \widetilde{j} + 1 $ we have \\[0.5ex]
$\{ 8 \widetilde{j} + 7, 8 \widetilde{j} + 8 \} \, = \, \{ 8 \widetilde{j'}+7, 8\widetilde{j'} +8 \} \, = \, \{ 8 \widetilde{j''} - 1, 8 \widetilde{j''} \} \, $ \\[0.5ex]
  $= \, e_{j'} \, \cap e_{j''+3} \, = \, e_{4 \widetilde{j'}  + 1} \, \cap e_{4 \widetilde{j''}  + 4}  \, = \, e_{4 \tj  + 1} \, \cap e_{4 \tj + 8}$.

\smallskip
Finally, we choose $\widetilde{j'} = \widetilde{j}$ and $\widetilde{j''} = \widetilde{j} - 1 $. This leads to \\[0.5ex]
$\{ 8 \widetilde{j} + 8, 8 \widetilde{j} + 9 \} \, = \, \{ 8 \widetilde{j'}+8, 8\widetilde{j'} + 9 \} \, = \, \{ 8 \widetilde{j''} + 16, 8 \widetilde{j''} + 17 \} \, $ \\[0.5ex]
  $= \, e_{j'} \, \cap e_{j''+1} \, = \, e_{4 \widetilde{j'}  + 1} \, \cap e_{4 \widetilde{j''}  + 2}  \, = \, e_{4 \tj  + 1} \, \cap e_{4 \tj - 2}$.

\bigskip
With this, we have shown  $E(C_n) \subseteq \cE(EI(\cH))$.\\[3ex]
{\bf\ul{Step 3.}}  The hyperedges of $\cH$ do not generate any chord in $C_n$: $\cE( EI(\cH)) \subseteq E(C_n)$.

\smallskip
\begin{enumerate}
\item[(i)]
 Using the construction of the hyperedges of $\cH$, for all hyperedges $e_f \neq e_g$ we verify that $e_f$ and $e_g$ do never have a 3-section in common. Note that we use the numbering of the hyperedges from 2.2.1, (I)--(IV).

Lets have a look at the remainders modulo 8 of the vertices contained in a 3-section of an arbitrary hyperedge. We find the following sets of remainders for the first 3-sections: $\{1,2,3 \}, \{2,3,4 \}, \{3,4,5 \}, \{ 4,5,6 \}$. For the second 3-sections we have the sets \\ $\{ 7,0,1 \}, \{ 0,1,2 \}, \{5,6,7 \}, \{6,7,0 \}$. Therefore, all these sets of remainders are pairwise distinct.

Assume that $e_f$ and $e_g$ have a 3-section in common. Because of the pairwise distinctness of the sets of the remainders mentioned above, it follows $f \equiv g \; \mbox{mod} \, 4$ and the common 3-section has to be the first or the second 3-section of both,  $e_f$ as well as $e_g$. Looking at the definition of the hyperedges, this leads to $f = g$ in contradiction to  $e_f \neq e_g$.

\medskip
Consequently,  a chord in $EI(\cH)$ cannot be obtained as an intersection of a 3-section $\{x, x+1, x+2 \} \subset e$  and a 3-section $\{y, y+1, y+2 \} \subset e'$ of any two hyperedges $e$ and $e'$ of $\cH$.
\item[(ii)]
If $e = \{ p, p+1, p+2, q, q+1, q+2 \} \in \cE$ is an arbitrary hyperedge, then the construction of $\cH$ provides that the distance $d_{C_n}( p+2,q)$ of the vertices $p+2$ and $q$ along the cycle $C_n$ is at least 4; analogously $d_{C_n}(q+2, p) \ge 4$.
Hence the intersection of $e$ with one 3-section $\{ y, y+1, y+2 \} \subset e'$ of another hyperedge $e'$ can only result in the (welcome) edges $\{p, p+1 \}, \{p+1, p+2 \}, \{q,q+1 \}$ or $\{q+1, q+2 \}$, which are edges of the cycle $C_n$.

\end{enumerate}

Because of (i) and (ii), for any chord $\{k,l \}$ $(|k - l| > 1 \, \wedge \, \{k,l\} \neq \{ n, 1 \})$ in $EI(\cH)$ resulting from the hyperedge $e= \{ p, p+1, p+2, q, q+1, q+2 \}$ and a second hyperedge $e' = \{ r, r+1, r+2, s, s+1, s+2 \}$ one of the following situations must occur. (For our investigations let $\{ p, p+1, p+2 \} $ and $\{ r, r+1, r+2 \}$ be the first 3-section of the hyperedge $e$ and $e'$, respectively.) \\[1.5ex]
(A): $k \in \{ p, p+1, p+2 \} \, \cap \, \{ r, r+1, r+2 \} \; \wedge \; l \in \{ q, q+1, q+2 \} \, \cap \, \{ s, s+1, s+2 \}$.\\[1.5ex]
(B): $k \in \{ p, p+1, p+2 \} \, \cap \, \{ s, s+1, s+2 \} \; \wedge \; l \in \{ q, q+1, q+2 \} \, \cap \, \{ r, r+1, r+2 \}$.

\bigskip
Situation (A) is much more easier to handle than situation (B), since in situation (A) the first 3-sections of $e$ and $e'$ have a non-empty intersection. Therefore, both hyperedges must be contained in one and the same 4-group $G_{\tj} = \{ e_j, e_{j+1}, e_{j+2}, e_{j+3}  \}$, with $j \in \{1,5,9, \ldots, \frac{n}{2} - 3 \}$ and $\widetilde{j} = \frac{j-1}{4} \in \{ 0,1, \ldots, \frac{n}{8} - 1 \}$. Moreover, $\{e, e' \} \neq \{ e_j, e_{j+3} \}$. Owing to $n \ge 24$, the definition of the hyperedges $e_j, e_{j+1}, e_{j+2}, e_{j+3}$ provides the contradiction $\{ q, q+1, q+2 \} \, \cap \, \{ s, s+1, s+2 \} = \emptyset$.

\medskip
Consequently, situation (B) has to occur and we assume \\ $e= \{ p, p+1, p+2, q, q+1, q+2 \}$, $e' = \{ r, r+1, r+2, s, s+1, s+2 \}$, \\[0.3ex]  $k \in \{ p, p+1, p+2 \} \, \cap \, \{ s, s+1, s+2 \}$ and $l \in \{ q, q+1, q+2 \} \, \cap \, \{ r, r+1, r+2 \}$.

\smallskip
Obviously, $ e \in G_{\tj} = \{ e_j, e_{j+1}, e_{j+2}, e_{j+3} \ \}$ and $ e' \in G_{\widetilde{j'}} = \{ e_{j'}, e_{j'+1}, e_{j'+2}, e_{j'+3} \ \}$  with \\[0.3ex]
$j, j' \in \{1,5,9, \ldots, \frac{n}{2} - 3 \}$ and $\widetilde{j} = \frac{j-1}{4}, \widetilde{j'} = \frac{j'-1}{4} \in \{ 0,1, \ldots, \frac{n}{8} - 1 \}$ as well as  $j \neq j'$, $ \widetilde{j} \neq \widetilde{j'}$.

We discuss all possible choices of $e$ and $e'$ in the sets $\{ e_j, e_{j+1}, e_{j+2}, e_{j+3} \ \}$ and \\ $\{ e_{j'}, e_{j'+1}, e_{j'+2}, e_{j'+3} \ \}$, respectively. In order to find the wanted $k$ and  $l$ in the intersections of the corresponding 3-sections, we are searching for numbers, i.e. vertices, $8 \tj + x$ and $8 \widetilde{j'} + y$ in the 3-sections being under investigation, which have one and the same remainder modulo 8.

\medskip
\noindent
\ul{B1: $e = e_j \; \wedge \; e' = e_{j'}$.}

\smallskip
From  $ k \in \{ p, p+1, p+2 \} \, \cap \, \{ s, s+1, s+2 \}$ \\[0.5ex]
$= \{ 8 \tj + 1, 8 \tj + 2, 8 \tj + 3 \} \, \cap \, \{ 8 \widetilde{ j'} + 7, 8 \widetilde{ j'} + 8, 8 \widetilde{ j'} +  9 \}$  \\[0.5ex]
it follows $ k =  8 \tj + 1 = 8 \widetilde{ j'} +  9$ and, therefore, $\tj = \widetilde{ j'} + 1$.

Thereby, $l \in \{ q, q+1, q+2 \} \, \cap \, \{ r, r+1, r+2 \}$ \\[0.5ex]
$= \{ 8 \tj + 7, 8 \tj + 8, 8 \tj + 9 \} \, \cap \, \{ 8 \widetilde{ j'} + 1, 8 \widetilde{ j'} + 2, 8 \widetilde{ j'} +  3 \}$ \\[0.5ex]
$= \{ 8 \widetilde{ j'} + 15, 8 \widetilde{ j'} + 16, 8 \widetilde{ j'} +  17 \} \, \cap \, \{ 8 \widetilde{ j'} + 1, 8 \widetilde{ j'} + 2, 8 \widetilde{ j'} +  3 \} = \emptyset$, \\
a contradiction.

\medskip
\noindent
\ul{B2: $e = e_j \; \wedge \; e' = e_{j'+1}$.}

\smallskip
From  $ k \in \{ p, p+1, p+2 \} \, \cap \, \{ s, s+1, s+2 \}$ \\[0.5ex]
$= \{ 8 \tj + 1, 8 \tj + 2, 8 \tj + 3 \} \, \cap \, \{ 8 \widetilde{ j'} + 16, 8 \widetilde{ j'} + 17, 8 \widetilde{ j'} +  18 \}$  \\[0.5ex]
it follows $ k =  8 \tj + 1 = 8 \widetilde{ j'} +  17$ or $ k =  8 \tj + 2 = 8 \widetilde{ j'} +  18$ and, therefore, $\tj = \widetilde{ j'} + 2$.

Thereby, $l \in \{ q, q+1, q+2 \} \, \cap \, \{ r, r+1, r+2 \}$ \\[0.5ex]
$= \{ 8 \tj + 7, 8 \tj + 8, 8 \tj + 9 \} \, \cap \, \{ 8 \widetilde{ j'} + 2, 8 \widetilde{ j'} + 3, 8 \widetilde{ j'} +  4 \}$ \\[0.5ex]
$= \{ 8 \widetilde{ j'} + 23, 8 \widetilde{ j'} + 24, 8 \widetilde{ j'} +  25 \} \, \cap \, \{ 8 \widetilde{ j'} + 2, 8 \widetilde{ j'} + 3, 8 \widetilde{ j'} +  4 \}$. \\[0.5ex]
In the case $n = 24$ this intersection is equal to $\{ 8 \widetilde{ j'} - 1, 8 \widetilde{ j'} , 8 \widetilde{ j'} +  1 \} \, \cap \, \{ 8 \widetilde{ j'} + 2, 8 \widetilde{ j'} + 3, 8 \widetilde{ j'} +  4 \} = \emptyset$, and in the case $n \ge 40$ the intersection is trivially equal to $\emptyset$, incompatible to (B).

%\pagebreak

\medskip
\noindent
\ul{B3: $e = e_j \; \wedge \; e' = e_{j'+2}$.}

\smallskip
For all $j, j' \in \{1,5,9, \ldots, \frac{n}{2} - 3 \}$ it follows $\{ p, p+1, p+2 \} \, \cap \, \{ s, s+1, s+2 \}$ \\[0.5ex]
$= \{ 8 \tj + 1, 8 \tj + 2, 8 \tj + 3 \} \, \cap \, \{ 8 \widetilde{ j'} + 13, 8 \widetilde{ j'} + 14, 8 \widetilde{ j'} +  15 \} = \emptyset$.

\medskip
\noindent
\ul{B4: $e = e_j \; \wedge \; e' = e_{j'+3}$.}

\smallskip
For all $j, j' \in \{1,5,9, \ldots, \frac{n}{2} - 3 \}$ it follows $\{ p, p+1, p+2 \} \, \cap \, \{ s, s+1, s+2 \}$ \\[0.5ex]
$= \{ 8 \tj + 1, 8 \tj + 2, 8 \tj + 3 \} \, \cap \, \{ 8 \widetilde{ j'} - 2, 8 \widetilde{ j'} - 1, 8 \widetilde{ j'} \} = \emptyset$.

\medskip
\noindent
\ul{B5: $e = e_{j+1} \; \wedge \; e' = e_{j'+1}$.}

\smallskip
From  $ k \in \{ p, p+1, p+2 \} \, \cap \, \{ s, s+1, s+2 \}$ \\[0.5ex]
$= \{ 8 \tj + 2, 8 \tj + 3, 8 \tj + 4 \} \, \cap \, \{ 8 \widetilde{ j'} + 16, 8 \widetilde{ j'} + 17, 8 \widetilde{ j'} +  18 \}$  \\[0.5ex]
it follows $ k =  8 \tj + 2 = 8 \widetilde{ j'} +  18$ and, therefore, $\tj = \widetilde{ j'} + 2$.

Thereby, $l \in \{ q, q+1, q+2 \} \, \cap \, \{ r, r+1, r+2 \}$ \\[0.5ex]
$= \{ 8 \tj + 16, 8 \tj + 17, 8 \tj + 18 \} \, \cap \, \{ 8 \widetilde{ j'} + 2, 8 \widetilde{ j'} + 3, 8 \widetilde{ j'} +  4 \}$ \\[0.5ex]
$= \{ 8 \widetilde{ j'} + 32, 8 \widetilde{ j'} + 33, 8 \widetilde{ j'} +  34 \} \, \cap \, \{ 8 \widetilde{ j'} + 2, 8 \widetilde{ j'} + 3, 8 \widetilde{ j'} +  4 \} = \emptyset$ -  incompatible to (B), since $n=24$ or $n \ge 40$. \\[0.5ex]
Note that in the case $n=32$ we would have $ l = 8 \widetilde{ j'} +  34 = 8 \widetilde{ j'} +  2$. This is the reason why for $n=32$ a modified construction of the hyperedges of $\cH$ will have  to be used later (cf. Remark 1 at the end of Subsection 2.2.2).

\medskip
\noindent
\ul{B6: $e = e_{j+1} \; \wedge \; e' = e_{j'+2}$.}

\smallskip
For all $j, j' \in \{1,5,9, \ldots, \frac{n}{2} - 3 \}$ it follows  $\{ p, p+1, p+2 \} \, \cap \, \{ s, s+1, s+2 \}$ \\[0.5ex]
$= \{ 8 \tj + 2, 8 \tj + 3, 8 \tj + 4 \} \, \cap \, \{ 8 \widetilde{ j'} + 13, 8 \widetilde{ j'} + 14, 8 \widetilde{ j'} +  15 \} = \emptyset.$

\medskip
\noindent
\ul{B7: $e = e_{j+1} \; \wedge \; e' = e_{j'+3}$.}

\smallskip
For all $j, j' \in \{1,5,9, \ldots, \frac{n}{2} - 3 \}$ we obtain $\{ p, p+1, p+2 \} \, \cap \, \{ s, s+1, s+2 \}$ \\[0.5ex]
$= \{ 8 \tj + 2, 8 \tj + 3, 8 \tj + 4 \} \, \cap \, \{ 8 \widetilde{ j'} - 2, 8 \widetilde{ j'} - 1, 8 \widetilde{ j'} \} = \emptyset.$

\medskip
\noindent
\ul{B8: $e = e_{j+2} \; \wedge \; e' = e_{j'+2}$.}

\smallskip
From  $ k \in \{ p, p+1, p+2 \} \, \cap \, \{ s, s+1, s+2 \}$ \\[0.5ex]
$= \{ 8 \tj + 3, 8 \tj + 4, 8 \tj + 5 \} \, \cap \, \{ 8 \widetilde{ j'} + 13, 8 \widetilde{ j'} + 14, 8 \widetilde{ j'} +  15 \}$  \\[0.5ex]
we get $ k =  8 \tj + 5 = 8 \widetilde{ j'} +  13$ and, therefore, $\tj = \widetilde{ j'} + 1$.

Thereby, $l \in \{ q, q+1, q+2 \} \, \cap \, \{ r, r+1, r+2 \}$ \\[0.5ex]
$= \{ 8 \tj + 13, 8 \tj + 14, 8 \tj + 15 \} \, \cap \, \{ 8 \widetilde{ j'} + 3, 8 \widetilde{ j'} + 4, 8 \widetilde{ j'} +  5 \}$ \\[0.5ex]
$= \{ 8 \widetilde{ j'} + 21, 8 \widetilde{ j'} + 22, 8 \widetilde{ j'} +  23 \} \, \cap \, \{ 8 \widetilde{ j'} + 2, 8 \widetilde{ j'} + 3, 8 \widetilde{ j'} +  4 \} = \emptyset$ --  contradictory to (B).

\medskip
\noindent
\ul{B9: $e = e_{j+2} \; \wedge \; e' = e_{j'+3}$.}

\smallskip
For all $j, j' \in \{1,5,9, \ldots, \frac{n}{2} - 3 \}$ we have $\{ p, p+1, p+2 \} \, \cap \, \{ s, s+1, s+2 \}$ \\[0.5ex]
$= \{ 8 \tj + 3, 8 \tj + 4, 8 \tj + 5 \} \, \cap \, \{ 8 \widetilde{ j'} - 2, 8 \widetilde{ j'} - 1, 8 \widetilde{ j'} \} = \emptyset.$

\medskip
\noindent
\ul{B10: $e = e_{j+3} \; \wedge \; e' = e_{j'+3}$.}

\smallskip
From  $ k \in \{ p, p+1, p+2 \} \, \cap \, \{ s, s+1, s+2 \}$ \\[0.5ex]
$= \{ 8 \tj + 4, 8 \tj + 5, 8 \tj + 6 \} \, \cap \, \{ 8 \widetilde{ j'} - 2, 8 \widetilde{ j'} - 1, 8 \widetilde{ j'} \}$  \\[0.5ex]
we get $ k =  8 \tj + 6 = 8 \widetilde{ j'} - 2$ and, therefore, $\widetilde{ j'} = \tj + 1$.

Thereby, $l \in \{ q, q+1, q+2 \} \, \cap \, \{ r, r+1, r+2 \}$ \\[0.5ex]
$= \{ 8 \tj - 2, 8 \tj - 1, 8 \tj \} \, \cap \, \{ 8 \widetilde{ j'} + 4, 8 \widetilde{ j'} + 5, 8 \widetilde{ j'} +  6 \}$ \\[0.5ex]
$= \{8 \tj - 2, 8 \tj - 1, 8 \tj  \} \, \cap \, \{ 8 \widetilde{ j} + 12, 8 \widetilde{ j} + 13, 8 \widetilde{ j} +  14 \} = \emptyset$, in contradiction to (B).

\medskip
Thus $E(C_n) = \cE(EI(\cH))$ and the proof of 2.2.1.1 (basic construction) is complete.

\medskip
We remark that the 6-uniformity of $\cH$ is trivial (see 2.2.1, (I)--(IV), for the definition of the hyperedges
$e_j$, $j \in \{ 1,2, \ldots, \frac{n}{2} \}$).

A second remark concerns the 3-regularity of   $\cH$, which is a conclusion from
\begin{itemize}
\item
$|V| = n$ and $| \cE | = \frac{n}{2}$,
\item
the numbering of the vertices along the cycle $C_n$ by $1,2, \ldots,n$,
\item
every hyperedge consists of two 3-sections of vertices, where each 3-section contains three immediately consecutive vertices,
\item
for each pair  of distinct hyperedges $e_f, e_g \in \cE$ it holds that $e_f$ and $e_g$ do never have a 3-section in common.
\end{itemize}
These properties imply that the $n$ 3-sections of all $\frac{n}{2}$ hyperedges of $\cE$ have to form the set $\{ \{1,2,3\}, \{ 2,3,4 \}, \ldots, \{n-2, n-1, n \}, \{n-1, n, 1 \}, \{n, 1, 2 \} \}$. Therefore each vertex $ v \in V$ is contained in exactly three hyperedges.
\\[1ex]

\paragraph{2.2.1.2 Supplemental construction: $l \in \{2,4,6 \}  $.}
\label{Sec_L12}
$\/$

\bigskip
First of all, we sketch the idea of the proof.

Consider the definitions of the hyperedges
of the hypergraph $\cH$ with $n=8k+l$ vertices given in 2.2.1, (I)--(IV). To avoid the
lengthy verification of  $EI(\cH) = C_n = C_{8k+l}$ analogously to the above proof  of the basic construction (see 2.2.1.1),
our present proof will be done inductively by adding two vertices and one hyperedge to $\cH$ in each induction step.

As the initial induction step, the basic construction ($l=0$) includes the verification for $n = 8 k + l = 8 k$.
For the induction hypothesis, let $l \in \{2, 4, 6 \}$ and  $\cH$ be the hypergraph constructed in 2.2.1, (I)--(IV),  with $n-2 = 8k + l - 2$
vertices, which has the property $EI(\cH) = C_{n-2} = C_{8k + l - 2}$.
We show how
to construct a hypergraph $\cH'$ with $n=8k + l$ vertices and $EI(\cH') = C_n$  from the  hypergraph $\cH$.

 For this end, we will add two new
 vertices $x$ and $y$ as well as one new hyperedge $e_\ast$ to $\cH$ and obtain $\cH'= (V', \cE') = (V(\cH)
  \cup \{x, y \}, \, \cE(\cH) \cup \{ e_\ast \} )$.
  This provides the first (the constructive) part of our proof (Step 1).
%
%\bigskip
% \noindent
%{\bf\ul{Step 1.}} Construction of the set of hyperedges $\cE(\cH)$ of the hypergraph $\cH$.

The reason is that  -- after relabelling the vertices and the hyperedges -- it will be easy to see that  the hyperedges of $\cH'$ are exactly the hyperedges given in our construction in 2.2.1, (I)--(IV).
Therefore, Step 1 from the beginning of 2.2.1 can be used also in the present case, i.e. for the supplemental construction.

\bigskip
Then, we will have to verify  $EI(\cH') = C_{n}$ for each $l \in \{2,3,4 \}$ (this corresponds to Step 2 and Step 3). For technical reasons, in the Subcase $l=2$
we will treat Step 3 before Step 2. The 6-regularity and 3-uniformity of the hypergraph $\cH$ follows analogously to the basic construction, i.e. to 2.2.1.1.

\medskip
As a preliminary consideration, we prove the following.

\medskip
\noindent
{\bf Claim.}
{\em Let $n= 8 k$, and $j, \tj, G_{\widetilde{j}} = \{ e_j, e_{j+1}, e_{j+2}, e_{j+3} \}$ be defined as at the beginning of the basic construction.
Then there is no hyperedge $e_z \in \cE$ with $ \{ 8 \tj + 1, 8 \tj + 2 \} \, \cap \,e_z \neq \emptyset$ and \\[0.3ex] $ \{ 8 \tj + 5, 8 \tj + 6 \} \, \cap \, e_z \neq \emptyset .$}

\begin{proof}
The definitions of $e_j, e_{j+1}, e_{j+2}$ and  $e_{j+3}$ provide $e_z \in \cE \setminus G_{\widetilde{j}}$.

Assume $p \in  \{ 8 \tj + 1, 8 \tj + 2 \} \, \cap \,e_z $ and $ q \in \{ 8 \tj + 5, 8 \tj + 6 \} \, \cap \, e_z$.
%Then $3 \le q - p \le 5$.
Then there is a $j' \in \{ 1,5,9, \ldots, \frac{n}{2} -3 \} \setminus \{ j \}$ such that $z \in \{ j', j'+1, j'+2, j'+3 \}$ and $\widetilde{j'} = \frac{j'-1}{4} \neq \tj.$

Obviously,
the remainder of $p$ and $q$ modulo 8 is from the set $\{ 1, 2 \}$  and $\{ 5, 6 \}$, respectively.

\noindent
Since the sets of the remainders modulo 8 of the vertices in the two 3-sections of $e_z$ are

\smallskip
$ \{ 1, 2, 3 \}$ and $ \{ 7, 0, 1 \}$, if $z = j'$,

$ \{  2, 3, 4 \}$ and $ \{  0, 1, 2 \}$, if $z = j'+1$,

$ \{  3, 4, 5 \}$ and $ \{ 5, 6, 7 \}$, if $z = j'+2$ \; and

$ \{ 4, 5, 6 \}$ and $ \{ 6, 7, 0 \}$, if $z = j'+3$, \\[0.5ex]
$p, q \in e_z$ is impossible. \hqed
\end{proof}

\smallskip

For simplicity, first we investigate the subcase $n = 8 k +2$, i.e. $l=2$. The remaining cases will make use of an analogous construction.

\bigskip
\noindent
{\ul {\em Subcase  $l = 2 $.}}

\medskip
\medskip
Again,  Step 1 (see (I)--(IV) at the beginning of 2.2.1) has to be adapted to the present case $l=2$.

\pagebreak
 \noindent
{\bf\ul{Step 1.}} Construction of the set of hyperedges $\cE(\cH)$ of the hypergraph $\cH$.

\medskip
Note that also for $l=2$ we will give an example (Example 2, see at the end of the subcase) for the constructed hypergraph, namely for $n=26$. But first, we will describe the idea of our construction.

For this end, let $n = 8k + 2$, $\cH =(V, \cE)$ with $V= \{ 1,2, \ldots, n-2 \}$ and $\cE = \{ e_1, e_2, \ldots, e_{\frac{n-2}{2}} \}$, where the hyperedges in $\cE$ should be constructed as in 2.2.1, (I)--(IV) (now with $n-2$ instead of $n$).
Consequently, $EI(\cH) = C_{n-2}$.

\medskip
To give a sketchy idea, consider the  4-group $G_{0} = \{ e_1, e_2, e_3, e_4 \}$ consisting of the hyperedges \\[0.5ex]
$e_1 = \{ 1,2,3,7,8,9 \}$,\\[0.3ex]
$e_2 = \{ 2,3,4,16,17,18 \}$,\\[0.3ex]
$e_3 = \{ 3,4,5, 13,14,15 \}$,\\[0.3ex]
$e_4 = \{ 4,5,6, n-4, n-3, n-2 \} = \{ 4,5,6, 8k - 2, 8k - 1, 8k \}$.

\smallskip
We proceed as follows: we insert the new vertex $x$  "between" the (old) vertices $6$ and $7$ (remember that the vertices are numbered by $1,2, \ldots, n-2$ along the cycle $C_{n-2}$). Then we insert a second new vertex $y$ "between" the (old) vertices $n-2$ and $1$. Simultaneously, we add a new hyperedge $e_\ast $ to the 4-group $G_0$ such that the new $G_0$ becomes a 5-group, i.e. $G_{0} = \{ e_1, e_2, e_3, e_4, e_\ast \}$. This procedure results in a new hypergraph $\cH'$ with -- as we will prove -- $EI(\cH') = C_{n}$.

To this end, a few modifications of several (old) hyperedges of the original hypergraph $\cH$ and some relabelling of vertices and hyperedges have to be carried out.

\bigskip
Now we come to the details of our construction.

\begin{enumerate}
\item[(i)]
We preliminarily remark that, owing to the above Claim, adding a new hyperedge $ e' = \{1,2,5,6 \}$ to $\cH$ would not induce a chord in $EI(\cH \cup \{ e' \}) = EI(\cH) = C_{n-2}$.

\smallskip
\item[(ii)]
We relabel the vertices of $V$ (also inside the hyperedges of $\cH$) according to

\smallskip
 $v := \left\{ \begin{array}{l@{\quad,\quad}l}
 v & v \in \{1,2, \ldots, 6 \} \\[0.8ex] v + 1  & v \in \{ 7,8, \ldots, n-2 \}
 \end{array} \right.$ \\

Then we have $ V = \{ 1,2, \ldots, 6,8,9, \ldots, n-1 \}$ and (i) remains valid (with this modified numbering of the vertices).

\smallskip
\item[(iii)]
Now we add two vertices and one hyperedge to the hypergraph $\cH =( V, \cE)$ and obtain $\cH'=(V', \cE')$ with

\smallskip
$V' \; := \; V \, \cup \{ x, y \}$, where $x = 7$ and $ y = n$ \quad and

%\smallskip
%$e_\ast \; := \; e' \, \cup \, \{ 7, n \} \; = \, \{ 5,6,7, n,1,2 \}$ and

%\smallskip
$\cE' \; := \; \cE \, \cup \, \{ e_\ast \}$, where $e_\ast \; := \; e' \, \cup \, \{ 7, n \} \; = \, \{ 5,6,7, n,1,2 \}$.

\smallskip
Following our usual notation, we refer to $\{ 5,6,7 \}$ and to $\{n,1,2 \}$ as the first and the second 3-section of $e_\ast$, respectively.

Because of (i) and the fact that $e_\ast$ is the only hyperedge containing the new vertices $7$ and $n$, (i) remains valid in the sense that there are no vertices $k, l \in V'$ with  $|k - l| > 1$ and $\{ k, l \} \subset e$, for all hyperedges $e \in \cE(EI(\cH'))$. The only exception is the (unoffending) case $k = 1$ and $l=n$. In other words: the new hyperedge $e_\ast$ will not generate a chord in the cycle $C_n$, which will arise as $EI(\cH')$ in (iv) (see below).

\medskip
In Step 3(i)  of the basic construction we saw that no two hyperedges have a 3-section in common. Obviously, this remains valid up to now, also if we include the new hyperedge $e_\ast$. Therefore, each hyperedge $e \in \cE(\cH') $ is uniquely determined by giving its first or its second 3-section. This property will simplify the considerations in  (iv).

\medskip
Finally, note that the relabelling of the vertices in (ii) and the insertion of the new vertices $7$ and $n$ in (iii) induce that in some 3-sections of several hyperedges there is a gap between the vertices $6, 8$ and $n-1, 1$, respectively. Originally, each 3-section of a hyperedge consists of three vertices $p, p+1, p+2$ being consecutive on the cycle $C_n$. The next step of our construction retrieves this previous state for all hyperedges.

\smallskip
\item[(iv)]
 Some 3-sections of four special hyperedges $e_\alpha, e_\beta, e_\gamma, e_\delta$ have to be modified and the modifications exclusively bear on the second 3-sections of these hyperedges. For clearness, in each case we give the modifications in the form

$ e_\kappa \; : \; \{ x, y, z \} \; \Rightarrow \; \{ x', y', z' \} \; \Rightarrow \; \{ x'', y'', z'' \}$, where $\kappa \in \{ \alpha, \beta, \gamma, \delta \}$ and

\smallskip
$ \{ x, y, z \}$ is the original 3-section in the hypergraph $\cH$ (before (i)),

$\{ x', y', z' \}$ is the 3-section after  (ii) (this is the same as after  (iii)) and

$\{ x'', y'', z'' \}$ is the final form of the 3-section after  (iv).

\smallskip
As mentioned above, the other (these are the first) 3-sections of the modified hyperedges remain unchanged. They  are not needed for our argumentation, so in the following we present only the (modified) second 3-sections.

\medskip
$ e_\alpha \; : \; \{ 5, 6, 7 \}  \hspace{15.6mm}  \Rightarrow \; \{ 5, 6, 8 \}   \hspace{15.6mm}  \Rightarrow \; \{ 6, 7, 8 \}$,

\smallskip
$ e_\beta \; : \; \{ 6, 7, 8 \}  \hspace{15.6mm} \Rightarrow \; \{ 6, 8, 9 \}  \hspace{15.6mm} \Rightarrow \; \{ 7, 8, 9 \}$,

\smallskip
$ e_\gamma \; : \; \{ n-3, n-2, 1 \} \; \, \Rightarrow \; \, \{ n-2, n-1, 1 \} \; \Rightarrow \; \{ n-2, n-1, n \}$ \quad and

\smallskip
$ e_\delta \; : \; \{  n-2, 1,2 \}  \hspace{9.1mm} \Rightarrow \; \{ n-1, 1,2 \}  \hspace{8.9mm} \Rightarrow \; \{n-1, n,1 \}$.\\[3ex]
{\bf\ul{Step 3.}}  The hyperedges of $\cH'$ do not generate any chord in $C_n$: $\cE( EI(\cH')) \subseteq E(C_n)$.

\smallskip
Note that $e_{\alpha} \neq e_\ast \,$, since $e_\ast$ comes into play not before (iii).
Clearly, considering only the hyperedges in $\cE \setminus \{ e_\alpha, e_\beta, e_\gamma, e_\delta, e_\ast \}$, these hyperedges  induce a subgraph of $C_n$ in $EI( \cH')$, therefore they do not cause any chords. Owing to (iii), no chord emerges if we add $e_\ast$ to this set of hyperedges.

Hence, we only have to show that the new hyperedges $e_\alpha, e_\beta, e_\gamma, e_\delta$ do not generate a chord:

\begin{enumerate}
\item
Obviously, the relabelling of the vertices in (ii) does not result in a chord, since the relabelling is carried out simultaneously in all hyperedges, i.e. -- apart from the modified vertex numbers -- the hyperedges remain unchanged.
\item
It is clear that the addition of the new vertices $7$ and $n$ in  (iii) cannot lead to a chord, because none of the "old" hyperedges (i.e. the hyperedges in $\cE$) contains one of these vertices.
\item
Above we demonstrated that, in (iii), $e_\ast$ does not generate any chord.
\item
It suffices to show that the modifications in (iv) do not produce any chord. Since these modifications involve only the hyperedges $e_\alpha, e_\beta, e_\gamma, e_\delta$, we have to consider solely the intersections of these hyperedges with other hyperedges of $\cH'$.
\begin{enumerate}
\item[(d1)]
It can be easily seen that (analogously as in 2.2.1.1, Step 3(i)), for all hyperedges $e_f \neq e_g$ in $\cE' = \cE \cup \{ e_\ast \}$, the hyperedges $e_f$ and $e_g$ do never have a 3-section in common. Therefore, a chord $\{k, l \}$ could result only from the intersection of two hyperedges $e_f$ and $e_g$, where $k$ is contained in one 3-section of $e_g$ as well as in one 3-section of $e_f$ and $l$ is contained in the other 3-section of $e_g$ and the other 3-section of $e_f$.

\smallskip
\item[(d2)]
We consider $e_\alpha = \{ \ldots, 6,7,8 \}$.

In (iv), the replacement of the vertex 5 by the vertex 7 does not result in a chord, since at that moment 7 was only in the hyperedge $e_\ast$ and, moreover, already before (iv) we had a nonempty intersection $e_\ast \cap e_\alpha \supseteq \{ 5, 6 \}$. Therefore -- owing to the non-existence of chords at this time -- the other 3-sections of $e_\ast$ and $e_\alpha$ have to be disjoint.

\smallskip
\item[(d3)]
Lets  look at $e_\beta = \{ \ldots, 7,8,9 \}$.

In (iv), superseding 6 by 7 does not lead to a chord, because till then 7 had been only in the hyperedges $e_\ast$ and $e_\alpha$. Additionally, before (iv) we had also nonempty intersections  $e_\ast \cap e_\beta \supseteq \{ 6 \}$ and $e_\alpha \cap e_\beta \supseteq \{ 6,8 \}$. Hence, for the same reason as in (d2), the other 3-sections of  $e_\ast$ and  $e_\beta$ as well as of $e_\alpha$ and $e_\beta$ must be disjoint.

\smallskip
\item[(d4)]
We come to $e_\gamma = \{ \ldots, n-2, n-1, n \}$.

The substitution of the vertex 1 by $n$ in (iv) does not generate a chord. The reason is that before this substitution the vertex $n$ had been contained only in $e_\ast$. What is more, before (iv) obviously $e_\ast \cap e_\gamma \supseteq \{ 1 \} \neq \emptyset$ and, analogously to (d1) and (d2), the remaining 3-sections of both hyperedges cannot include any common vertex.

\smallskip
\item[(d5)]
Finally, we investigate $e_\delta = \{ \ldots, n-1, n, 1 \}$.

Now we look at the replacement of the vertex 2 by $n$ in (iv) and see that until then $n$ had been only an element of the hyperedges $e_\ast$ and $e_\gamma$. Again, before (iv) we had  nonempty intersections  $e_\ast \cap e_\delta \supseteq \{ 1,2 \}$ and  $e_\gamma \cap e_\delta \supseteq \{ n-1, 1 \}$, consequently the intersections of the other 3-sections of these hyperedges have to be empty.

\end{enumerate}

\smallskip
So we see that  (ii), (iii) and (iv) in our construction do not generate any chord in $EI(\cH')$.

\end{enumerate}

\smallskip
\noindent
{\bf\ul{Step 2.}} {\em   The hyperedges of $\cH'$ generate all edges of $C_n$: $E(C_n) \subseteq \cE( EI(\cH'))$.}

\smallskip
In order to verify that  $E(C_n) \subseteq \cE( EI(\cH'))$, we do not want to discuss here the -- in a certain sense "trivial" -- edges $\{i, i+1 \}$ of $C_n$ which result from "simply incremented vertices" (see (ii)). Before the relabelling step (ii), such edges had been edges of the cycle $C_{n-2} = EI(\cH)$. Therefore, it suffices to consider the remaining edges of ´$C_n$, i.e. the edges in the vertex ranges $\{ 5,6,7,8,9 \}$ and $\{n-2, n-1, n, 1, 2 \}$.

These edges result from the following intersections:

$ \{  5 ,6   \} = e_4  \, \cap \, e_\ast     $, $ \{  6 , 7  \} = e_\ast  \, \cap \, e_\alpha     $, $ \{ 7  , 8  \} = e_\alpha   \, \cap \, e_\beta     $, $ \{  8 , 9  \} = e_\beta   \, \cap \,  e_1     $, \\ $ \{ n-2  , n-1  \} = e_4   \, \cap \,  e_\gamma     $, $ \{ n-1  , n  \} = e_\gamma    \, \cap \, e_\delta     $, $ \{ n  , 1   \} = e_\delta   \, \cap \,  e_\ast     $ and $ \{ 1   , 2  \} = e_\ast    \, \cap \,  e_1     $.

\end{enumerate}

Now we come to Example 2. Note that  for the hyperedges  the labelling from
 2.2.1, (I)--(IV), is used, i.e. the hyperedge $e_\ast$ becomes $e_5$ and the indices of the former $e_5, e_6, \ldots$ have to be increased by 1.

\pagebreak
\begin{example}
$\cH = (V, \{e_1, e_2, \ldots, e_{13} \})$ with $EI(\cH) = C_{26}$ has the following hyperedges.\\[1ex]
$ \begin{array}{l@{\, = \, \{ \,}c@{, \,}c@{, \,}c@{, \,}c@{, \,}c@{, \,}c@{\, \},}}
e_{1} & {\bf 1} & {\bf 2} & {\bf 3} &  8  & 9  & 10 \\[0.5ex]
e_{2} & {\bf 2} & {\bf 3} & {\bf 4} & 17 & 18 & 19 \\[0.5ex]
e_{3} & {\bf 3} & {\bf 4} & {\bf 5} &  14 & 15 & 16 \\[0.5ex]
e_{4} & {\bf 4} & {\bf 5} & {\bf 6} &  23  & 24  & 25  \\[0.5ex]
e_{5} &  1  & 2 & {\bf 5} & {\bf 6} & {\bf 7} &  26   \\[0.5ex]
e_{6} & {\bf 10} & {\bf 11} & {\bf 12} &  16  & 17  & 18 \\[0.5ex]
e_{7} & 1 & {\bf 11} & {\bf 12} & {\bf 13} & 25 & 26 \\[0.5ex]
e_{8} & {\bf 12} & {\bf 13} & {\bf 14} &  22 & 23 & 24 \\[0.5ex]
e_{9} & 7 & 8 & 9 &  {\bf 13}  & {\bf 14}  & {\bf 15}  \\[0.5ex]
e_{10} & {\bf 18} & {\bf 19} & {\bf 20} &  24  & 25  & 26 \\[0.5ex]
e_{11} & 9 & 10 & 11 & {\bf 19} & {\bf 20} & {\bf 21} \\[0.5ex]
e_{12} & 6 & 7 & 8 &  {\bf 20} & {\bf 21} & {\bf 22} \\[0.5ex]
e_{13} & 15 & 16 & 17 &  {\bf 21}  & {\bf 22}  & {\bf 23}
\end{array} $ \\[1.5ex]
where in $EI(\cH)$ the edges of $C_{26}$ are generated analogously as shown in Example 1.

Note that now $G_0 = \{e_1, e_2, e_3, e_4, e_5 \}$ is a 5-group and the other groups $G_1$ and  $G_2$  are 4-groups of hyperedges.

\end{example}

\medskip
\noindent
{\ul {\em Subcase $l =  4  $.}}

\medskip
Here is the adaption of  Step 1  to the actual case $l=4$.

\medskip
 \noindent
{\bf\ul{Step 1.}} Construction of the set of hyperedges $\cE(\cH)$ of the hypergraph $\cH$.

\medskip
At the outset of 2.2.1.2, we remarked that Step 1 from the beginning of 2.2.1 can be \\[0.3ex] used also in 2.2.1.2, i.e. for each $l \in \{ 2, 3, 4 \}$ in the supplemental construction.

\smallskip
We proceed analogously to Subcase $l=2$, so we make use of our construction of the hypergraph $\cH =( V, \cE)$ with $n-2$ vertices, where now $ n-2 = (8k + 4 ) -2 = 8 k +2$ holds. Looking at Example 2, we remember that $G_0 = \{e_1, e_2, e_3, e_4, e_5 \}$ is a 5-group and the other groups $G_1, G_2, \ldots$ are 4-groups.

In a first step, we relabel \vspace{-1mm}
\begin{itemize}
\item the vertices $ v \in V = \{ 1,2, \ldots, n-2 \}$ by $v := v - 9$ modulo $(n-2)$,
\item the hyperedges $e_i \in \cH = \{e_1, e_2, \ldots, e_{\frac{n-2}{2}} \}$ by
$e_i := e_{i-4}$ (the indices taken modulo $\frac{n-2}{2}$) and
\item the edge groups $G_j \in \{ G_0, G_1, \ldots G_{\frac{n-4}{8} - 1} \}$ by
$G_j := G_{j-1}$ (the indices taken modulo $\frac{n-4}{8}$).
\end{itemize}

That way, the former 4-group $G_{\frac{n-4}{8} - 1}$ becomes the 4-group $G_0$ with the new label $0$ and the former 5-group $G_0$ is now the new  5-group $G_1$ (with the corresponding relabeled vertices and hyperedges). Consequently, in our present hypergraph we have locally (i.e., in the vertex range, where in Subcase $l=2$ the essential modifications in $\cH$ in connection with the insertion of the new vertices $x$ and $y$
and the new hyperedge $e_\ast$ took place) the same structure  as we had in Subcase $l=2$.
Therefore, also in the present situation (i.e., where $n = 8k +4$ holds) the same procedure as in Subcase $l=2$  can be used in order to obtain a new hypergraph $\cH'$ with $n$ vertices from the hypergraph $\cH$ with $n-2$ vertices.

\bigskip
\noindent
\ul{\bf{Step 2 / Step 3.}}
Analogously to Subcase $l=2$, it can be shown  that $EI(\cH') = C_{n}$ is valid. We omit the detailed proof here.

\pagebreak
Choosing $n=28$ we give the last example in 2.2.1.2. Again
$e_\ast$ becomes $e_5$ and the indices of the former $e_5, e_6, \ldots$ have to be increased by 1.

\medskip
\begin{example}
$\cH = (V, \{e_1, e_2, \ldots, e_{14} \})$ with $EI(\cH) = C_{28}$ has the following hyperedges.\\[1ex]
$ \begin{array}{l@{\, = \, \{ \,}c@{, \,}c@{, \,}c@{, \,}c@{, \,}c@{, \,}c@{\, \},}}
e_{1} & {\bf 1} & {\bf 2} & {\bf 3} &  8  & 9  & 10 \\[0.5ex]
e_{2} & {\bf 2} & {\bf 3} & {\bf 4} & 19 & 20 & 21 \\[0.5ex]
e_{3} & {\bf 3} & {\bf 4} & {\bf 5} &  16 & 17 & 18 \\[0.5ex]
e_{4} & {\bf 4} & {\bf 5} & {\bf 6} &  25  & 26  & 27  \\[0.5ex]
e_{5} &  1  & 2 & {\bf 5} & {\bf 6} & {\bf 7} &  28   \\[0.5ex]
e_{6} & {\bf 11} & {\bf 12} & {\bf 13} &  18  & 19  & 20 \\[0.5ex]
e_{7} & 1 & {\bf 12} & {\bf 13} & {\bf 14} & 27 & 28 \\[0.5ex]
e_{8} & {\bf 13} & {\bf 14} & {\bf 15} &  24 & 25 & 26 \\[0.5ex]
e_{9} & 7 & 8 & 9 &  {\bf 14}  & {\bf 15}  & {\bf 16}  \\[0.5ex]
e_{10} & 10  & 11  & 12 & {\bf 15} & {\bf 16} & {\bf 17}\\[0.5ex]
e_{11} & {\bf 20} & {\bf 21} & {\bf 22} & 26 & 27 & 28   \\[0.5ex]
e_{12} & 9 & 10 & 11 &  {\bf 21} & {\bf 22} & {\bf 23} \\[0.5ex]
e_{13} & 6 & 7 & 8 &  {\bf 22}  & {\bf 23}  & {\bf 24}\\[0.5ex]
e_{14} & 17 & 18 & 19 &  {\bf 23}  & {\bf 24}  & {\bf 25}
\end{array} $ \\[1.5ex]
where now $G_0 = \{e_1, e_2, e_3, e_4, e_5 \}$ and $G_1 = \{e_6, e_7, e_8, e_9, e_{10} \}$ are 5-groups and \\ $G_2 = \{e_{11}, e_{12}, e_{13}, e_{14} \}$  is a 4-group of hyperedges.

\end{example}

\medskip
\noindent
{\ul {\em Subcase $l =  6  $.}}

\medskip
The verification can be done  analogously to Subcase $l=4$ . \hqed
%\\[1.5ex]
%Concerning the 6-uniformity and the 3-regularity the argumentation given at the end of the basic construction (see 3.2.1.1) remains valid also for the supplemental construction in 3.2.1.2.

\medskip

\subsubsection{Proof of Lemma 2}
\label{Sec_L2}
$\/$

\bigskip
Now we have $k=4$ and therefore $n = 8 k + l = 32 + l$ is valid.
%At first we want to explain why
Remember that the construction used in the proof of Lemma \ref{Leven} (cf. 2.2.1) cannot be applied in the present case (see the note in 2.2.1.1, Step 3, B5).

%To see the problem, it suffices to consider $n = 32$ and have a look at  the construction %in the proof  of Lemma \ref{Leven} (cf. the above case (C) for $l=0$).
%in 3.2.1, part (III).
%Then in one case the construction of $e_{j+1}$ and $e_{j'+1}$ for  certain $j \neq j'$ causes the conflict that $e_{j+1} \, \cap \, e_{j'+1}$ will be a chord in $C_n$. The reason is that if $ \tj = \tj' + 2$ holds, then because of \\[0.5ex]
%$\{ 8 \tj +2, 8 \tj +3, 8 \tj +4 \} \, \cap \, \{ 8 \tj' + 16,  8 \tj' + 17,  8 \tj' + 18 \} = \{ 8 \tj +2 \}$ \quad and \\[0.5ex]
%$\{ 8 \tj' +2, 8 \tj' +3, 8 \tj' +4 \} \, \cap \, \{ 8 \tj + 16,  8 \tj + 17,  8 \tj + 18 \} = $ \\[0.5ex]
%$\{ 8 \tj' +2, 8 \tj' +3, 8 \tj' +4 \} \, \cap \, \{ 8 \tj' + 32,  8 \tj' + 33,  8 \tj' + 34 \} = \{ 8 \tj' +2 \}$ (modulo 32), \\[0.5ex]
% we would obtain the chord
%$\{ 8 \tj +2, 8 \tj' +2  \} = \{ 8 \tj' +18, 8 \tj' +2  \} $.

Hence we have to modify slightly the construction of the hyperedges.

\bigskip
\noindent
{\bf\ul{Step 1.}} Construction of the set of hyperedges $\cE(\cH)$ of the hypergraph $\cH$.

\medskip
Take the construction of the hyperedges  described in Step 1 of the proof of Lemma \ref{Leven} (see 2.2.1). Let $G_{\tj} = \{ e_j, e_{j+1}, e_{j+2}, \ldots \}$ be an arbitrarily chosen 4-group or 5-group of hyperedges.
Then we swap the second 3-sections of the second hyperedge $e_{j+1}$  and the third hyperedge $e_{j+2}$. In detail, for each case we give the modified hyperedges.
Subject to $n > 30$, we use the same distinction of the cases (I)--(IV) as above in 2.2.1. Again, we begin with the 5-groups $G_{\tj}$.

%\pagebreak
\medskip
\noindent
\underline{(I): $\tj \in \{ 0,1, \ldots, \frac{l}{2} - 2 \}.$}

\smallskip
\noindent
$e_{j+1}= \{ 10 \tj+2, 10 \tj+3, 10 \tj+4, 10 \tj+16, 10 \tj+17, 10 \tj+18 \}, $\\[0.5ex]
$e_{j+2}= \{ 10 \tj+3, 10 \tj+4, 10 \tj+5, 10 \tj+19, 10 \tj+20, 10 \tj+21 \}. $

\bigskip

\noindent
\underline{(II): $l \ge 2 \, \wedge \, \tj = \frac{l}{2} - 1.$}

\smallskip
\noindent
 $e_{j+1}= \{ 10 \tj+2, 10 \tj+3, 10 \tj+4,10 \tj+14, 10 \tj+15, 10 \tj+16 \}, $\\[0.5ex]
$e_{j+2}= \{ 10 \tj+3, 10 \tj+4, 10 \tj+5, 10 \tj+17, 10 \tj+18, 10 \tj+19 \}.$

%\pagebreak
Now we come to the 4-groups.

\bigskip

\noindent
\underline{(III): $\tj \in \{ \frac{l}{2}, \frac{l}{2} + 1 \ldots, 2\}$ \; or \; $l=0 \, \wedge \,  \tj = 3.$}

\smallskip
\noindent
$e_{j+1}= \{ x+2, x+3, x+4, x+13, x+14, x+15 \}, $\\[0.5ex]
$e_{j+2}= \{ x+3, x+4, x+5, x+16, x+17, x+18 \}. $

\bigskip

\noindent
\underline{(IV): $l \ge 2 \, \wedge \,  \tj = 3.$}

\medskip
\noindent
$e_{\frac{n}{2} - 2}= \{ n-7,n-6, n-5, 6, 7, 8  \}, $\\[0.5ex]
$e_{\frac{n}{2} - 1}= \{ n-6,n-5, n-4, 9,10,11 \}. $

\medskip
Thereby, Step 1 is complete.
%\end{proof}

\bigskip
\noindent
It remains to demonstrate\\[2ex]
{\bf\ul{Step 2 / Step 3.}} {\em   The hyperedges of $\cH$ generate exactly the edges of $C_n$: $E(C_n) = \cE( EI(\cH))$.}

\medskip
Looking at the construction of the hyperedges above, it is obvious that there are only little modifications in comparison with the construction in the proof of Lemma 1 (cf. 2.2.1), i.e. with the case $n = 8 k + l$, where $k \neq 4$. In principle, for any 4-group or 5-group $G_{\tj} = \{ e_j, e_{j+1}, e_{j+2}, \ldots \}$ of hyperedges (for $n = 8 k + l$, $k \neq 4$) we have only to swap the second 3-sections of the second hyperedge $e_{j+1}$  and the third hyperedge $e_{j+2}$ in order to obtain the corresponding (new) hyperedges $e_{j+1}$ and $e_{j+2}$ for  $n = 8 k + l$, $k= 4$.
%For details, see the (I)--(IV) in the proof of Lemma 2 above.

Hence we could argue that the verification of Steps 2 and 3 for Lemma 2 can be done analogously to that of Lemma 1. Of course, that way the detailed verification for the present case would be as lengthy as in the case of Lemma 1.

But since Lemma 2 includes only the four possible values 32, 34, 36, 38  for the number $n$ of the vertices, alternatively the hyperedges can be written down easily and their intersections can be computed. We give explicitly the edge set of the wanted hypergraph $\cH$ with $EI(\cH) = C_{n}$ for the first two values of $n$, i.e. for $n= 32$ and $n=34$.

%\pagebreak
\begin{example}

$\cH = (V, \{e_1, e_2, \ldots, e_{16} \})$ with $EI(\cH) = C_{32}$ has the following hyperedges.\\[2ex]
$ \begin{array}{l@{\, = \, \{ \,}c@{, \,}c@{, \,}c@{, \,}c@{, \,}c@{, \,}c@{\, \},}}
e_{1} & {\bf 1} & {\bf 2} & {\bf 3} &  7  & 8  & 9 \\[0.5ex]
e_{2} & {\bf 2} & {\bf 3} & {\bf 4} &  13 & 14 & 15 \\[0.5ex]
e_{3} & {\bf 3} & {\bf 4} & {\bf 5} &  16 & 17 & 18\\[0.5ex]
e_{4} & {\bf 4} & {\bf 5} & {\bf 6} &  30  & 31  & 32  \\[0.5ex]
e_{5} & {\bf 9} & {\bf 10} & {\bf 11} &  15  & 16  & 17 \\[0.5ex]
e_{6} & {\bf 10} & {\bf 11} & {\bf 12} & 21 & 22 & 23 \\[0.5ex]
e_{7} & {\bf 11} & {\bf 12} & {\bf 13} &  24 & 25 & 26 \\[0.5ex]
e_{8} & 6 & 7 & 8 &  {\bf 12}  & {\bf 13}  & {\bf 14}  \\[0.5ex]
e_{9} &  {\bf 17} & {\bf 18} &  {\bf 19}  & 23  & 24 & 25\\[0.5ex]
e_{10}  & {\bf 18} & {\bf 19} & {\bf 20} & 29 & 30 & 31\\[0.5ex]
e_{11} & 1 & 2 &  {\bf 19} & {\bf 20} & {\bf 21} & 32 \\[0.5ex]
e_{12} & 14 & 15 & 16 &  {\bf 20}  & {\bf 21}  & {\bf 22}\\[0.5ex]
e_{13} & 1 & {\bf 25} & {\bf 26} & {\bf 27} & 31 & 32 \\[0.5ex]
e_{14} & 5 & 6 & 7 &  {\bf 26}  & {\bf 27}  & {\bf 28}\\[0.5ex]
e_{15} & 8 & 9 & 10 &  {\bf 27} & {\bf 28} & {\bf 29} \\[0.5ex]
e_{16} & 22 & 23 & 24 &  {\bf 28}  & {\bf 29}  & {\bf 30}
\end{array} $ \\[0.5ex]
\end{example}
and, secondly, the hyperedges of $\cH = (V, \{e_1, e_2, \ldots, e_{17} \})$ with $EI(\cH) = C_{34}$ follow.

\begin{example}
Now we have one 5-group $G_0 = \{e_1, e_2, e_3, e_4, e_5 \}$ and three 4-groups \\
$G_1= \{e_6, e_7, e_8, e_9 \}$, $G_2= \{e_{10}, e_{11}, e_{12}, e_{13} \}$, $G_3= \{e_{14}, e_{15}, e_{16}, e_{17} \}$, where\\[2ex]
$ \begin{array}{l@{\, = \, \{ \,}c@{, \,}c@{, \,}c@{, \,}c@{, \,}c@{, \,}c@{\, \},}}
e_{1} & {\bf 1} & {\bf 2} & {\bf 3} &  8  & 9 & 10 \\[0.5ex]
e_{2} & {\bf 2} & {\bf 3} & {\bf 4} &   14 & 15 & 16 \\[0.5ex]
e_{3} & {\bf 3} & {\bf 4} & {\bf 5} &   17 & 18 & 19\\[0.5ex]
e_{4} & {\bf 4} & {\bf 5} & {\bf 6} &  31  & 32  & 33  \\[0.5ex]
e_{5} & 1 & 2 & {\bf 5} & {\bf 6} & {\bf 7} &  34  \\[0.5ex]
e_{6} & {\bf 10} & {\bf 11} & {\bf 12} &  16  & 17  & 18 \\[0.5ex]
e_{7} & {\bf 11} & {\bf 12} & {\bf 13} & 22 & 23 & 24 \\[0.5ex]
e_{8} & {\bf 12} & {\bf 13} & {\bf 14} &  25 & 26 & 27 \\[0.5ex]
e_{9} & 7 & 8 & 9 &  {\bf 13}  & {\bf 14}  & {\bf 15}  \\[0.5ex]
e_{10} &  {\bf 18} & {\bf 19} &  {\bf 20}  & 24  & 25 & 26\\[0.5ex]
e_{11}  & {\bf 19} & {\bf 20} & {\bf 21} & 30 & 31 & 32\\[0.5ex]
e_{12} & 1 & {\bf 20} & {\bf 21} & {\bf 22} & 33 & 34 \\[0.5ex]
e_{13} & 15 & 16 & 17 &  {\bf 21}  & {\bf 22}  & {\bf 23}\\[0.5ex]
e_{14} & {\bf 26} & {\bf 27} & {\bf 28} & 32 & 33 & 34 \\[0.5ex]
e_{15} & 6 & 7 & 8 &  {\bf 27}  & {\bf 28}  & {\bf 29}\\[0.5ex]
e_{16} & 9 & 10 & 11 &  {\bf 28} & {\bf 29} & {\bf 30} \\[0.5ex]
e_{17} & 23 & 24 & 25 &  {\bf 29}  & {\bf 30}  & {\bf 31}
\end{array} $ \\[0.5ex]
\end{example}
the remaining cases $n=36$ and $n=38$ can be handled analogously. \hqed

Together with the short note at the end of B5 (cf. 2.2.1.1, Step 3) the following remark justifies the need for the different constructions for $k=4$ and $k \neq 4$.

\begin{remark}
The construction from the proof of Lemma \ref{Lk4} does not work for all even $n \ge 24$.
\end{remark}

\begin{proof}
To give a simple counterexample, take $n=24$ and assume that the construction of the proof of Lemma \ref{Lk4} does work. It follows $l=0$, therefore we only have 4-groups of hyperedges. We look at $e_2$ and $e_7$, which are from $G_0$ and $G_1$. Then we have $\tj = 0$, $\tj' = 1$, $j = 1$, $j'=5$, $e_2 = e_{j+1}$ and $e_7 = e_{j'+2}$. Hence, we would have to use the construction from (III) of the proof and with $x= 8 \tj$ we obtain  for the hyperedges
$e_2 = \{ 2,3,4,13,14,15 \}$ and $e_7 = \{ 11,12,13,24,25,26 \} =  \{ 11,12,13,24,1,2 \}$ (modulo 24).
This results in the chord $e_2 \, \cap \, e_7 = \{ 2,13 \}$, which is a contradiction.
\hqed
\end{proof}

\begin{remark}
For $n=8 k$, the constructions from the proofs of Lemma \ref{Leven}  and Lemma \ref{Lk4} do not work if $k \le 2$.
\end{remark}

\begin{proof}
If $k \in \{ 0,1 \}$, the remark is trivially true; so let $k=2$. Obviously, we have $l=0$. Since 16 is an integral multiple of 8, we have only two 4-groups of hyperedges in $\cH$. It suffices to consider the hyperedges constructed in the part (III).

First, we use the proof of Lemma \ref{Leven} and consider the hyperedge $e_2$. Then $j=1$, $\tj = 0$ and
$e_2 = e_ {j+1} = \{ 2,3,4,16,17,18 \} = \{ 2,3,4,16,1,2 \}$ (modulo 16). This leads to $e_1 \cap e_2 = \{ 1,2,3 \}$ which results in a chord contradicting $EI( \cH ) = C_n$.

Secondly, concerning the construction in the proof of Lemma \ref{Lk4}, we look at the hyperedge $e_3$. Again, we get $j=1$ and $\tj = 0$. Thus $e_3 = e_ {j+2} = \{ 3,4,5, 16,17,18 \} = \{  3,4,5,16,1,2 \}$ (modulo 16). We obtain the same chord as in the previous case, namely $e_1 \cap e_3 = \{ 1,2,3 \}$, a contradiction.
\hqed
\end{proof}

Finally, to complete the proof of Theorem \ref{Teven}, we have to  investigate  the odd cardinalities $n = | V |$.

\medskip

\subsubsection{Proof of Lemma 3}
\label{Sec_L3}
$\/$

\bigskip

Note that  the present case ($n$ odd) is  simple and therefore there is no necessity to discuss Step 1, Step 2 and Step 3 separately.

\smallskip

Let $ n = 8 k + l + 1$, where $k \ge 3$, $l \in \{ 0,2,4,6 \}$; $\cH' = ( V', \cE')$ with
$EI(\cH') = C_{n-1}$, $V' = \{ 1,2, \ldots, n-1 \}$ and $\cE' = \{ e_1', e_2', \ldots, e_{\frac{n-1}{2}}' \} $. Since $n-1$ is even, we can assume that $\cH'$ (including the numbering of the vertices and the hyperedges) is constructed according to the proofs of Lemma \ref{Leven}
 and Lemma \ref{Lk4} (of course with the (only) modification that now we have $n-1$ vertices instead of $n$ in the lemmas).

Owing to the construction, the vertex 3 and the vertex 4 is contained exclusively in the hyperedges $e_1, e_2, e_3$ and $e_2, e_3, e_4$, respectively; namely in the first 3-sections of these hyperedges.
We obtain $\cH = (V, \cE)$ with $EI(\cH) = C_n$ and $V= \{ 1,2, \ldots, n \}$ from $\cH' = ( V', \cE')$ % with $EI(\cH') = C_{n-1}$
as follows.

Looking at the cycle $C_{n-1}$, we add
a new vertex $n$ "between" the vertices 3 and 4 in this cycle, i.e. in $C_{n-1}=EI(\cH')$, and get $C_n=EI(\cH)$ by the following  construction of $\cH=(V, \cE)$.

\medskip
$ V:= V' \, \cup \, \{ n \}$,

\medskip
$ \cE := \{ e_1, e_2, \ldots, e_{\frac{n+1}{2}} \}$, where

\bigskip
%\hspace*{5cm}
$e_i = \left\{ \begin{array}{l@{\;,\quad}l}
 e_i' & i = 1,4,5,6, \ldots, \frac{n-1}{2} \\[0.8ex]
(e_2' \, \setminus \, \{ 4 \}) \, \cup \, \{ n \} & i = 2 \\[0.8ex]
(e_3' \, \setminus \, \{ 3 \}) \, \cup \, \{ n \} & i = 3  \\[0.8ex]
\{ 3, n, 4 \} & i = \frac{n+1}{2} \, .
 \end{array} \right.$

\bigskip
Instead of the edge $ e_2 \, \cap \, e_3 = \{ 3,4 \} \in \cE(EI(\cH'))$,  in $EI(\cH)$ we have the edges
 $ e_2 \, \cap \, e_{\frac{n+1}{2}} = \{ 3,n \}$ and
 $ e_3 \, \cap \, e_{\frac{n+1}{2}} = \{ n,4 \}$.

 \smallskip
In addition to these non-empty intersections of  $e_{\frac{n+1}{2}}$ with $e_2$ and $e_3$,  in $\cH$ we have only two further non-empty intersections with other hyperedges, namely
 $ e_1 \, \cap \, e_{\frac{n+1}{2}} = \{ 3  \}$ and  $ e_4 \, \cap \, e_{\frac{n+1}{2}} = \{ 4  \}$.
 Thus, the hyperedge  $e_{\frac{n+1}{2}}$ does not induce any chord in $C_n$. The same holds for the (modified) hyperedges $e_2$ and $e_3$. Hence $EI(\cH) = C_n$, where $C_n$ is the vertex sequence
 $(\ldots, n-2, n-1, 1,2,3, n, 4,5,6, \ldots ).$ \hqed

\bigskip
Thereby, the proof of Theorem 2 is complete.

\subsection{Concluding remarks}

Looking at the laborious constructions in the proof of Theorem 2, as a next step it seems to be sensible to investigate the following specialized version of Problem 1 (see Section 1).

\medskip
{\bf Problem 2.}
Let $k \ge 3$, $n_0 \in \N^+ $ and $n \ge n_0$. What is the minimum cardinality $| \cE |$ of the edge set of a $3k$-uniform hypergraph $\cH =(V, \cE)$ with $EI( \cH) = C_n$?

\medskip

Note that Corollary 1 and Theorem \ref{Teven} gives the solution for the 3-uniform ($k=1$) and the 6-uniform ($k=2$) case, respectively.

\smallskip
In order to motivate the concentration on $3k$-uniform hypergraphs, we consider  Theorem \ref{Tmin} and Theorem \ref{Teven}.  For $k=2$, we verified that to construct  edge-minimal 6-uniform hypergraphs $\cH$ with $EI(\cH) = C_n$, the hyperedges of $\cH$ can be composed from certain  3-sections of $C_n$. The combination of 3-sections results in hyperedges of cardinalities being integral multiples of 3.
%This approach seems to be a likely one also for hyperedges of larger cardinality $r$, which leads to $ r = 3 k$ .
Following this likely approach also for hyperedges of larger cardinality $r$, this leads to $ r = 3 k$ ($k \ge 3$).

%We conjecture that the solution of the problem is difficult for $k \ge 3$.

\end{document}